\newtheorem{theorem}{Theorem}
\theoremstyle{plain}
\newtheorem{definition}{Definition}
\newtheorem{example}{Example}
\newtheorem{lemma}{Lemma}
\newtheorem{problem}{Problem}
\numberwithin{equation}{section}
\begin{document}
\title[A new fastest iteration method]{A Picard-S hybrid type iteration
method for solving a differential equation with retarded argument}
\author{Faik G\"{U}RSOY}
\address{Faculty of Arts and Sciences, Department of Mathematics, Adiyaman
University, 02040 Adiyaman, Turkey.}
\email{faikgursoy02@hotmail.com}
\author{Vatan KARAKAYA}
\address{Department of Mathematical Engineering, Yildiz Technical
University, Davutpasa Campus, Esenler, 34210 Istanbul, Turkey}
\email{vkkaya@yahoo.com; vkkaya@yildiz.edu.tr}
\urladdr{http://www.yarbis1.yildiz.edu.tr/vkkaya}
\subjclass[2000]{Primary 47H06, 54H25, 34L05.}
\keywords{Iteration Methods, New iteration method, Picard-S iteration, Rate
of Convergence, Data Dependence of Fixed Points, Contraction Mappings,
Differential Equations with Retarded Argument.}

\begin{abstract}
We introduce a new iteration method called Picard-S iteration. We show that
the Picard-S iteration method can be used to approximate the fixed point of
contraction mappings. Also, we show that our new iteration method is
equivalent and converges faster than CR iteration method for the
aforementioned class of mappings. Furthermore, by providing an example, it
is shown that the Picard-S iteration method converges faster than all
Picard, Mann, Ishikawa, Noor, SP, CR, S and some other iteration methods in
the existing literature. A data dependence result is proven for fixed point
of contraction mappings with the help of the new iteration method. Finally,
we show that the Picard-S iteration method can be used to solve differential
equations with retarded argument.
\end{abstract}

\maketitle

\section{Introduction}

Fixed point theory has been appeared as one of the most powerful and
substantial theoretical tools of mathematics. This theory has a long history
and has been studied intensively by many researchers in various aspects. The
main objective of studies in the fixed point theory is to find solutions for
the following equation which is commonly known as fixed point equation:%
\begin{equation}
Tx=x\text{,}  \label{eqn1}
\end{equation}%
where $T$ is a self-map of an ambient space $X$ and $x\in X$. A wide variety
of problems arise in all areas of physical, chemical and biological
sciences, engineering, economics, and management can be modelled by linear
or nonlinear equations of form 
\begin{equation}
Fx=0\text{,}  \label{eqn2}
\end{equation}%
where $F$ is a linear or nonlinear operator. Equations of form (1.2) can be
easily reformulated as a fixed point equation of form (1.1). Since equation
(1.1) has the same solution as the original equation (1.1), finding
solutions of equation (1.1) leads to solutions of equation (1.2). To solve
equations given by (1.1), two types of methods are normally used: direct
methods and iterative methods. Due to various reasons, direct methods can be
impractical or fail in solving equations (1.1), and thus iterative methods
become a viable alternative. For this reason, the iterative approximation of
fixed points has become one of the major and basic tools in the theory of
equations. Consequently, the literature of this highly dynamic research area
abounds many iterative methods that have been introduced and developed by a
wide audience of researchers to serve various purposes, viz., \cite{GK,
KMSP, DataF, Kkaya, Khan, Ola, SahuPet, SolOt1, SolOt2} among others.

We begin our exposition with an overview of various iterative methods.

We will denote the set of all positive integers including zero by $%
\mathbb{N}
$ over the course of this paper. Let $D$ be a nonempty convex subset of a
Banach space $B$, and $T$ a self map of $D$. A point $x_{\ast }\in D$
satisfying $Tx=x$ is called fixed point of $T$, and the set of all fixed
point of $T$ denoted by $F_{T}$. Let $\left\{ \eta _{n}^{i}\right\}
_{n=0}^{\infty }$, $i\in \left\{ 0,1,2\right\} $ be real sequences in $\left[
0,1\right] $ satisfying certain control condition(s).

The most popular and simplest iteration method is formulated by%
\begin{equation}
\left\{ 
\begin{array}{c}
p_{0}\in D\text{, \ \ \ \ \ \ \ \ \ \ \ \ \ \ \ } \\ 
p_{n+1}=Tp_{n}\text{, }n\in 
\mathbb{N}
\text{,}%
\end{array}%
\right.  \label{eqn3}
\end{equation}%
and is known as Picard iteration method \cite{Picard}, which is commonly
used to approximate fixed point of contraction mappings satisfying%
\begin{equation}
\left\Vert Tx-Ty\right\Vert \leq \delta \left\Vert x-y\right\Vert \text{, }%
\delta \in \left( 0,1\right) \text{, for all }x,y\in B\text{.}  \label{eqn4}
\end{equation}%
The following iteration methods are referred to as Mann \cite{Mann},
Ishikawa \cite{Ishikawa}, Noor \cite{Noor}, SP \cite{SP}, S \cite{S, Sahu}
and CR \cite{CR} iteration methods, respectively:%
\begin{equation}
\left\{ 
\begin{array}{c}
\nu _{0}\in D\text{,\ \ \ \ \ \ \ \ \ \ \ \ \ \ \ \ \ \ \ \ \ \ \ \ \ \ \ \
\ \ \ \ \ \ \ \ \ \ } \\ 
\nu _{n+1}=\left( 1-\eta _{n}^{0}\right) \nu _{n}+\eta _{n}^{0}T\nu _{n}%
\text{, }n\in 
\mathbb{N}
\text{,}%
\end{array}%
\right.  \label{eqn5}
\end{equation}%
\begin{equation}
\left\{ 
\begin{array}{c}
\upsilon _{0}\in D\text{, \ \ \ \ \ \ \ \ \ \ \ \ \ \ \ \ \ \ \ \ \ \ \ \ \
\ \ \ \ \ \ \ \ \ \ \ \ \ \ \ } \\ 
\upsilon _{n+1}=\left( 1-\eta _{n}^{0}\right) \upsilon _{n}+\eta
_{n}^{0}Tw_{n}\text{, \ \ \ \ \ \ \ \ \ \ \ } \\ 
w_{n}=\left( 1-\eta _{n}^{1}\right) \upsilon _{n}+\eta _{n}^{1}T\upsilon _{n}%
\text{, }n\in 
\mathbb{N}
\text{,}%
\end{array}%
\right.  \label{eqn6}
\end{equation}%
\begin{equation}
\left\{ 
\begin{array}{c}
\omega _{0}\in D\text{, \ \ \ \ \ \ \ \ \ \ \ \ \ \ \ \ \ \ \ \ \ \ \ \ \ \
\ \ \ \ \ \ \ \ \ \ \ \ \ \ } \\ 
\omega _{n+1}=\left( 1-\eta _{n}^{0}\right) \omega _{n}+\eta _{n}^{0}T\varpi
_{n}\text{, \ \ \ \ \ \ \ \ \ \ \ } \\ 
\varpi _{n}=\left( 1-\eta _{n}^{1}\right) \omega _{n}+\eta _{n}^{1}T\rho _{n}%
\text{, \ \ \ \ \ \ \ \ \ } \\ 
\rho _{n}=\left( 1-\eta _{n}^{2}\right) \omega _{n}+\eta _{n}^{2}T\omega _{n}%
\text{,~}n\in 
\mathbb{N}
\text{,}%
\end{array}%
\right.  \label{eqn7}
\end{equation}%
\begin{equation}
\left\{ 
\begin{array}{c}
q_{0}\in D\text{,\ \ \ \ \ \ \ \ \ \ \ \ \ \ \ \ \ \ \ \ \ \ \ \ \ \ \ \ \ \
\ \ \ \ \ \ \ \ \ \ \ } \\ 
q_{n+1}=\left( 1-\eta _{n}^{1}\right) r_{n}+\eta _{n}^{1}Tr_{n}\text{,\ \ \
\ \ \ \ \ \ \ \ \ \ } \\ 
r_{n}=\left( 1-\eta _{n}^{2}\right) s_{n}+\eta _{n}^{2}Ts_{n}\text{,\ \ \ \
\ \ \ \ \ \ } \\ 
s_{n}=\left( 1-\eta _{n}^{3}\right) q_{n}+\eta _{n}^{3}Tq_{n}\text{, }n\in 
\mathbb{N}
\text{,}%
\end{array}%
\right.  \label{eqn8}
\end{equation}%
\begin{equation}
\left\{ 
\begin{array}{c}
t_{0}\in D\text{, \ \ \ \ \ \ \ \ \ \ \ \ \ \ \ \ \ \ \ \ \ \ \ \ \ \ \ \ \
\ \ \ \ \ \ \ \ \ \ } \\ 
t_{n+1}=\left( 1-\eta _{n}^{0}\right) Tt_{n}+\eta _{n}^{0}Tu_{n}\text{,\ \ \
\ \ \ \ \ \ \ \ } \\ 
u_{n}=\left( 1-\eta _{n}^{1}\right) t_{n}+\eta _{n}^{1}Tt_{n}\text{, }n\in 
\mathbb{N}
\text{, \ }%
\end{array}%
\right.  \label{eqn9}
\end{equation}%
\begin{equation}
\left\{ 
\begin{array}{c}
u_{0}\in D\text{, \ \ \ \ \ \ \ \ \ \ \ \ \ \ \ \ \ \ \ \ \ \ \ \ \ \ \ \ \
\ \ \ \ \ \ \ \ \ \ \ } \\ 
u_{n+1}=\left( 1-\eta _{n}^{0}\right) v_{n}+\eta _{n}^{0}Tv_{n}\text{, \ \ \
\ \ \ \ \ \ \ \ \ \ } \\ 
v_{n}=\left( 1-\eta _{n}^{1}\right) Tu_{n}+\eta _{n}^{1}Tw_{n}\text{, \ \ \
\ \ \ } \\ 
w_{n}=\left( 1-\eta _{n}^{2}\right) u_{n}+\eta _{n}^{2}Tu_{n}\text{, }n\in 
\mathbb{N}
\text{.}%
\end{array}%
\right.  \label{eqn10}
\end{equation}%
Studying the convergence of fixed point iteration methods is of utmost
importance from various aspects and thus, in recent years, much attention
has been given to study of convergence of various iterative methods for
different classes of operators, e.g. \cite{Kirk, Krasnosel'skii, KM-I,
Olatinwo, GenKrasnosel'skii}. In many cases, there can be more than one
iteration method to approximate fixed points of a particular mapping, e.g. 
\cite{SS, Karakaya, Rhoades, Multistep, Rhoades1, Zxue}. In such cases, one
must make a choice among some given iteration methods by taking into account
some important criteria. For example, there are two main criteria such as
the speed of convergence and simplicity which make an iteration method more
effective than the others. In such cases, the following problems arise
naturally: Which one of these iteration methods converges faster to the
fixed point in question? and How do we compare the speed of these iteration
methods that converge to the same fixed point? There are only a few attempts
to solve such an important numerical problem, see \cite{Babu, Berinde1,
Berinde2, KN, Hussain1, Hussain, Karakaya, Suantai, Popescu, ZXUE, Yuan}.

The following definitions about the rate of convergence are due to Berinde 
\cite{Berinde}.

\begin{definition}
\cite{Berinde} Let $\left\{ a_{n}\right\} _{n=0}^{\infty }$ and $\left\{
b_{n}\right\} _{n=0}^{\infty }$ be two sequences of real numbers with limits 
$a$ and $b$, respectively. Assume that there exists%
\begin{equation}
\underset{n\rightarrow \infty }{\lim }\frac{\left\vert a_{n}-a\right\vert }{%
\left\vert b_{n}-b\right\vert }=l\text{.}  \label{eqn11}
\end{equation}

(i) If $l=0$, then we say that $\left\{ a_{n}\right\} _{n=0}^{\infty }$
converges faster to $a$ than $\left\{ b_{n}\right\} _{n=0}^{\infty }$ to $b$.

(ii) If $0<l<\infty $, then we say that $\left\{ a_{n}\right\}
_{n=0}^{\infty }$ and $\left\{ b_{n}\right\} _{n=0}^{\infty }$ have the same
rate of convergence.
\end{definition}

\begin{definition}
Suppose that for two fixed point iteration processes $\left\{ u_{n}\right\}
_{n=0}^{\infty }$ and $\left\{ v_{n}\right\} _{n=0}^{\infty }$ both
converging to the same fixed point $p$, the following error estimates%
\begin{equation}
\left\Vert u_{n}-p\right\Vert \leq a_{n}\text{ for all }n\in 
\mathbb{N}
\text{,}  \label{eqn12}
\end{equation}%
\begin{equation}
\left\Vert v_{n}-p\right\Vert \leq b_{n}\text{ for all }n\in 
\mathbb{N}
\text{,}  \label{eqn13}
\end{equation}%
are available where $\left\{ a_{n}\right\} _{n=0}^{\infty }$ and $\left\{
b_{n}\right\} _{n=0}^{\infty }$ are two sequences of positive numbers
(converging to zero). If $\left\{ a_{n}\right\} _{n=0}^{\infty }$ converges
faster than $\left\{ b_{n}\right\} _{n=0}^{\infty }$, then $\left\{
u_{n}\right\} _{n=0}^{\infty }$ converges faster than $\left\{ v_{n}\right\}
_{n=0}^{\infty }$ to $p$.
\end{definition}

In the sequel, whenever we talk about the rate of convergence, we refer to
the definitions given above.

In 2007, Agarwal et al. \cite{S} addressed the following question: \textit{%
Is it possible to develop an iteration process whose rate of convergence is
faster than the Picard iteration (1.3)?}

They answered this problem by introducing an S-iteration method defined by
(1.9). It was shown in \cite{S} that S-iteration method (1.9) converges at a
rate same as that of Picard iteration method (1.3) and faster than Mann
iteration method (1.5) for the class of contraction mappings satisfying
(1.4). In 2009, Sahu \cite{Sahu} was interested in the same problem and
proved both theoretically and numerically that S-iteration method (1.9)
converges at a rate faster than both Picard iteration method (1.3) and Mann
iteration method (1.5) for the class of contraction mappings (1.4). A recent
study by Chugh, et al. \cite{CR} shows that CR iterative method (1.10)
converges faster than the Picard (1.3), Mann (1.5), Ishikawa (1.6), Noor
(1.7), SP (1.8) and S (1.9) iterative methods for a particular class of
quasi-contractive operators which include the aforementioned class of
contraction operators.

Inspired by the works mentioned above, we propose the following problem:

\begin{problem}
Is it possible to develop an iteration process whose rate of convergence is
even faster than the iteration (1.10)?
\end{problem}

To answer this problem, we introduce the following iteration method called
Picard-S iteration:

\begin{equation}
\left\{ 
\begin{array}{c}
x_{0}\in D\text{, \ \ \ \ \ \ \ \ \ \ \ \ \ \ \ \ \ \ \ \ \ \ \ \ \ \ \ \ \
\ \ \ \ \ \ \ \ \ \ \ } \\ 
x_{n+1}=Ty_{n}\text{, \ \ \ \ \ \ \ \ \ \ \ \ \ \ \ \ \ \ \ \ \ \ \ \ \ \ \
\ \ \ \ \ \ \ } \\ 
y_{n}=\left( 1-\eta _{n}^{1}\right) Tx_{n}+\eta _{n}^{1}Tz_{n}\text{, \ \ \
\ \ \ } \\ 
z_{n}=\left( 1-\eta _{n}^{2}\right) x_{n}+\eta _{n}^{2}Tx_{n}\text{, }n\in 
\mathbb{N}
\text{,}%
\end{array}%
\right.  \label{eqn14}
\end{equation}%
In this paper, we show that the Picard-S iteration method can be used to
approximate fixed point of contraction mappings. Also, we show that our new
iteration method is equivalent and converges faster than CR iteration method
for the aforementioned class of mappings. Furthermore, by providing an
example, when applied to contraction mappings it is shown that the Picard-S
iteration method converges faster than CR iteration method and hence also
faster than all Picard, Mann, Ishikawa, Noor, SP, S and some other iteration
methods in the existing literature. A data dependence result is proven for
fixed point of contraction mappings with the help of the new iteration
method. Finally, we show that the Picard-S iteration method can be used as
an effective method to solve differential equations with retarded argument.

In order to obtain our main results we need following definition and lemmas.

\begin{definition}
\cite{Vasile} Let $T$,$\widetilde{T}:B\rightarrow B$ be two operators. We
say that $\widetilde{T}$ is an approximate operator of $T$ if for all $x\in
B $ and for a fixed $\varepsilon >0$ we have 
\begin{equation}
\left\Vert Tx-\widetilde{T}x\right\Vert \leq \varepsilon .  \label{eqn15}
\end{equation}
\end{definition}

\begin{lemma}
\cite{Weng}Let $\left\{ \beta _{n}\right\} _{n=0}^{\infty }$ and $\left\{
\rho _{n}\right\} _{n=0}^{\infty }$ be nonnegative real sequences satisfying
the following inequality:%
\begin{equation}
\beta _{n+1}\leq \left( 1-\lambda _{n}\right) \beta _{n}+\rho _{n}\text{,}
\label{eqn16}
\end{equation}%
where $\lambda _{n}\in \left( 0,1\right) $, for all $n\geq n_{0}$, $%
\dsum\nolimits_{n=1}^{\infty }\lambda _{n}=\infty $, and $\frac{\rho _{n}}{%
\lambda _{n}}\rightarrow 0$ as $n\rightarrow \infty $. Then $%
\lim_{n\rightarrow \infty }\beta _{n}=0$.
\end{lemma}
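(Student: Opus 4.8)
The plan is to run the standard $\varepsilon$-argument together with a telescoping product estimate. Fix $\varepsilon>0$. Since $\rho_n/\lambda_n\to 0$, there is an index $N\geq n_0$ such that $\rho_n\leq \varepsilon\lambda_n$ for all $n\geq N$. Inserting this into the recursion (1.17) and subtracting $\varepsilon$ from both sides, I would first verify the one-step inequality
\begin{equation*}
\beta_{n+1}-\varepsilon \leq (1-\lambda_n)(\beta_n-\varepsilon),\qquad n\geq N,
\end{equation*}
which follows from $\beta_{n+1}\leq (1-\lambda_n)\beta_n+\rho_n\leq (1-\lambda_n)\beta_n+\varepsilon\lambda_n$ by regrouping. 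Iterating this bound from $N$ to $n$ gives
\begin{equation*}
\beta_{n+1}-\varepsilon \leq \left(\prod_{k=N}^{n}(1-\lambda_k)\right)(\beta_N-\varepsilon),\qquad n\geq N.
\end{equation*}

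The key step, and the only place where a genuine estimate is needed, is to show that these partial products tend to zero. Since $\lambda_k\in(0,1)$, the elementary inequality $1-x\leq e^{-x}$ yields
\begin{equation*}
0\leq \prod_{k=N}^{n}(1-\lambda_k)\leq \exp\left(-\sum_{k=N}^{n}\lambda_k\right),
\end{equation*}
and the right-hand side converges to $0$ as $n\to\infty$ because $\sum_{k=1}^{\infty}\lambda_k=\infty$. Hence the right-hand side of the iterated inequality above tends to $0$ (the factor $\beta_N-\varepsilon$ being merely a fixed real number), so $\limsup_{n\to\infty}\beta_{n+1}\leq \varepsilon$.

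Finally, since $\varepsilon>0$ was arbitrary and $\beta_n\geq 0$ for all $n$, we obtain $\limsup_{n\to\infty}\beta_n\leq 0\leq \liminf_{n\to\infty}\beta_n$, that is, $\lim_{n\to\infty}\beta_n=0$. I do not expect any real obstacle: the induction and the product estimate are routine, and the only point requiring a little care is that the inequality $\beta_{n+1}-\varepsilon\leq (1-\lambda_n)(\beta_n-\varepsilon)$ holds irrespective of the sign of $\beta_n-\varepsilon$, which is precisely why the bound $\limsup_{n\to\infty}\beta_n\leq\varepsilon$ is valid without any assumption relating $\beta_N$ and $\varepsilon$.
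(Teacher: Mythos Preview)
Your argument is correct and is essentially the standard proof of this classical lemma. Note, however, that the paper does not supply its own proof of this statement: Lemma~1 is simply quoted from \cite{Weng} without proof, so there is nothing in the paper to compare your proposal against beyond confirming that your proof indeed establishes the cited result.
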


\begin{lemma}
\cite{Data Is 2} Let $\left\{ \beta _{n}\right\} _{n=0}^{\infty }$ be a
nonnegative sequence for which one assumes there exists $n_{0}\in 
\mathbb{N}
$, such that for all $n\geq n_{0}$ one has satisfied the inequality 
\begin{equation}
\beta _{n+1}\leq \left( 1-\mu _{n}\right) \beta _{n}+\mu _{n}\gamma _{n}%
\text{,}  \label{eqn17}
\end{equation}%
where $\mu _{n}\in \left( 0,1\right) ,$ for all $n\in 
\mathbb{N}
$, $\sum\limits_{n=0}^{\infty }\mu _{n}=\infty $ and $\eta _{n}\geq 0$, $%
\forall n\in 
\mathbb{N}
$. Then the following inequality holds 
\begin{equation}
0\leq \lim \sup_{n\rightarrow \infty }\beta _{n}\leq \lim \sup_{n\rightarrow
\infty }\gamma _{n}.  \label{eqn18}
\end{equation}
\end{lemma}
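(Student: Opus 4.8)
The plan is to reduce the claim to the elementary fact that $\prod_{k=N}^{\infty}(1-\mu_k)=0$ whenever $\mu_k\in(0,1)$ and $\sum_k\mu_k=\infty$. First note that the lower bound $0\leq\limsup_{n\to\infty}\beta_n$ is immediate from $\beta_n\geq 0$, so the whole content lies in the upper bound. If $\limsup_{n\to\infty}\gamma_n=+\infty$ there is nothing to prove, so I would put $L:=\limsup_{n\to\infty}\gamma_n<\infty$ and aim to show $\limsup_{n\to\infty}\beta_n\leq L$.

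Fix $\varepsilon>0$ and choose $N\geq n_{0}$ with $\gamma_n\leq L+\varepsilon$ for all $n\geq N$; here one uses $\gamma_n\geq 0$, which is plainly the hypothesis intended by the misprinted ``$\eta_n\geq 0$''. Substituting this bound into \eqref{eqn17} gives $\beta_{n+1}\leq(1-\mu_n)\beta_n+\mu_n(L+\varepsilon)$ for $n\geq N$, and the key manipulation is to set $a_n:=\beta_n-(L+\varepsilon)$, so that the inequality collapses to $a_{n+1}\leq(1-\mu_n)a_n$ for all $n\geq N$. Now I would split into two cases. If $a_m\leq 0$ for some $m\geq N$, then since $1-\mu_n>0$ the inequality forces $a_n\leq 0$, i.e. $\beta_n\leq L+\varepsilon$, for every $n\geq m$. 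Otherwise $a_n>0$ for all $n\geq N$, and iterating gives $0<a_n\leq a_N\prod_{k=N}^{n-1}(1-\mu_k)$; since $\sum_k\mu_k=\infty$ forces this product to tend to $0$, we get $a_n\to 0$ and hence $\limsup_{n\to\infty}\beta_n\leq L+\varepsilon$. In either case $\limsup_{n\to\infty}\beta_n\leq L+\varepsilon$.

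Letting $\varepsilon\downarrow 0$ then yields $\limsup_{n\to\infty}\beta_n\leq L=\limsup_{n\to\infty}\gamma_n$, which combined with the trivial lower bound is exactly \eqref{eqn18}. I do not expect a serious obstacle; the only point needing a moment's care is that $a_n$ may change sign because $\beta_n$ can dip below $L+\varepsilon$, and the two-case analysis above --- or, equivalently, replacing $a_n$ by its positive part $a_n^{+}$, for which $a_{n+1}^{+}\leq(1-\mu_n)a_n^{+}$ holds unconditionally --- disposes of this cleanly. Everything else is a routine telescoping/product estimate.
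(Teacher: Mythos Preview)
Your argument is correct. Note, however, that the paper does not supply its own proof of this lemma: it is quoted verbatim from \cite{Data Is 2} and used as a black box, so there is no in-paper proof to compare against. Your $\varepsilon$-shift $a_n:=\beta_n-(L+\varepsilon)$ together with the dichotomy on the sign of $a_n$ (or the equivalent positive-part trick) is the standard route and goes through cleanly; the only ingredient beyond bookkeeping is the classical fact that $\prod_{k\ge N}(1-\mu_k)=0$ when $\mu_k\in(0,1)$ and $\sum_k\mu_k=\infty$, which follows from $1-x\le e^{-x}$. One minor remark: you flag the hypothesis $\gamma_n\ge 0$ (the evident intent of the misprinted ``$\eta_n\ge 0$''), but your argument does not actually invoke it anywhere---it serves only to make the conclusion $\limsup\beta_n\le\limsup\gamma_n$ compatible with $\beta_n\ge 0$, i.e.\ to ensure the right-hand side is nonnegative.
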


\section{Main Results}

\textbf{Convergence Analysis}

\begin{theorem}
Let $D$ be a nonempty closed convex subset of a Banach space $B$ and $%
T:D\rightarrow D$ a contraction map satisfying condition (1.4). Let $\left\{
x_{n}\right\} _{n=0}^{\infty }$ be an iterative sequence generated by (1.14)
with real sequences $\left\{ \eta _{n}^{i}\right\} _{n=0}^{\infty } $, $i\in
\left\{ 1,2\right\} $ in $\left[ 0,1\right] $ satisfying $\sum_{k=0}^{n}\eta
_{k}^{1}\eta _{k}^{2}=\infty $. Then $\ \left\{ x_{n}\right\} _{n=0}^{\infty
}$ converges to a unique fixed point of $T$, say $x_{\ast }$.
\end{theorem}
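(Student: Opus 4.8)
The plan is to follow the standard Banach-type argument, adapted to the three-layered Picard-S recursion (1.14). Since $T$ is a contraction on a closed subset of a Banach space, existence and uniqueness of the fixed point $x_\ast$ is guaranteed by the Banach fixed point theorem, so the real content is the convergence $x_n \to x_\ast$. First I would estimate $\|z_n - x_\ast\|$ in terms of $\|x_n - x_\ast\|$: using $z_n = (1-\eta_n^2)x_n + \eta_n^2 T x_n$ and the contraction condition (1.4) together with $T x_\ast = x_\ast$, one gets $\|z_n - x_\ast\| \le (1 - (1-\delta)\eta_n^2)\|x_n - x_\ast\|$. Next I would substitute this into the estimate for $\|y_n - x_\ast\|$, where $y_n = (1-\eta_n^1)T x_n + \eta_n^1 T z_n$; applying (1.4) to each term yields $\|y_n - x_\ast\| \le \delta(1-\eta_n^1)\|x_n - x_\ast\| + \delta\eta_n^1\|z_n - x_\ast\| \le \delta\bigl(1 - (1-\delta)\eta_n^1\eta_n^2\bigr)\|x_n - x_\ast\|$.

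Finally, since $x_{n+1} = T y_n$, one more application of (1.4) gives
\begin{equation*}
\|x_{n+1} - x_\ast\| \le \delta\|y_n - x_\ast\| \le \delta^2\bigl(1 - (1-\delta)\eta_n^1\eta_n^2\bigr)\|x_n - x_\ast\|.
\end{equation*}
Iterating this inequality from $0$ to $n$ produces
\begin{equation*}
\|x_{n+1} - x_\ast\| \le \delta^{2(n+1)}\|x_0 - x_\ast\|\prod_{k=0}^{n}\bigl(1 - (1-\delta)\eta_k^1\eta_k^2\bigr).
\end{equation*}
Because $\delta \in (0,1)$, the factor $\delta^{2(n+1)} \to 0$ already, and since $1 - (1-\delta)\eta_k^1\eta_k^2 \le 1$ each product term is at most $1$; hence the right-hand side tends to $0$ and $x_n \to x_\ast$. (Alternatively, one can invoke Lemma 2 with $\beta_n = \|x_n - x_\ast\|$ and $\lambda_n = (1-\delta)\eta_n^1\eta_n^2$, using the hypothesis $\sum \eta_k^1\eta_k^2 = \infty$ together with $x_{n+1}\le(1-\lambda_n)\beta_n$ after absorbing the $\delta^2$ factor, but the direct telescoping argument is cleaner here since $\delta<1$ does the work by itself.)

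I do not expect a genuine obstacle: the only points requiring care are the correct use of the convexity of $D$ (so that $z_n, y_n \in D$ and the iteration is well-defined) and keeping track of which terms carry a factor of $\delta$ versus which carry only the convex-combination coefficient — in particular noting that $y_n$ is built from $Tx_n$ and $Tz_n$ (not $x_n$ and $z_n$), which is what produces the extra power of $\delta$ and makes the convergence manifestly fast. The hypothesis $\sum_{k=0}^n \eta_k^1\eta_k^2 = \infty$ is not actually needed for mere convergence once $\delta<1$, but it is the natural condition to state and is exactly what Lemma 2 (and the later rate-of-convergence comparisons) will require.
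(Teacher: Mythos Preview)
Your proof is correct and follows essentially the same route as the paper: the three estimates for $\|z_n-x_\ast\|$, $\|y_n-x_\ast\|$, and $\|x_{n+1}-x_\ast\|$, and the telescoped product bound
\[
\|x_{n+1}-x_\ast\|\le \delta^{2(n+1)}\|x_0-x_\ast\|\prod_{k=0}^{n}\bigl(1-(1-\delta)\eta_k^1\eta_k^2\bigr)
\]
are exactly what the paper obtains.

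The only divergence is in the concluding step. The paper applies the inequality $1-x\le e^{-x}$ to the product and then invokes the hypothesis $\sum_k\eta_k^1\eta_k^2=\infty$ to force the exponential factor to zero. You instead observe that $\delta^{2(n+1)}\to 0$ already does the job, the product being bounded by~$1$; this is more elementary and, as you correctly note, shows that the divergence hypothesis on $\sum_k\eta_k^1\eta_k^2$ is not needed for the convergence statement itself. The paper's version has the advantage that it isolates the role of that hypothesis for later use (the rate comparison in Theorem~3 and the data-dependence result both rely on it), but for Theorem~1 as stated your finish is sharper.
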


\begin{proof}
The well-known Picard-Banach theorem guarantees the existence and uniqueness
of $x_{\ast }$. We will show that $x_{n}\rightarrow x_{\ast }$ as $%
n\rightarrow \infty $. From (1.4) and (1.14) we have%
\begin{eqnarray}
\left\Vert z_{n}-x_{\ast }\right\Vert &=&\left\Vert \left( 1-\eta
_{n}^{2}\right) x_{n}+\eta _{n}^{2}Tx_{n}-\left( 1-\eta _{n}^{2}+\eta
_{n}^{2}\right) x_{\ast }\right\Vert  \notag \\
&\leq &\left( 1-\eta _{n}^{2}\right) \left\Vert x_{n}-x_{\ast }\right\Vert
+\eta _{n}^{2}\left\Vert Tx_{n}-Tx_{\ast }\right\Vert  \notag \\
&\leq &\left( 1-\eta _{n}^{2}\right) \left\Vert x_{n}-x_{\ast }\right\Vert
+\eta _{n}^{2}\delta \left\Vert x_{n}-x_{\ast }\right\Vert  \notag \\
&=&\left[ 1-\eta _{n}^{2}\left( 1-\delta \right) \right] \left\Vert
x_{n}-x_{\ast }\right\Vert \text{,}  \label{eqn19}
\end{eqnarray}%
\begin{eqnarray}
\left\Vert y_{n}-x_{\ast }\right\Vert &\leq &\left( 1-\eta _{n}^{1}\right)
\left\Vert Tx_{n}-Tx_{\ast }\right\Vert +\eta _{n}^{1}\left\Vert
Tz_{n}-Tx_{\ast }\right\Vert  \notag \\
&\leq &\left( 1-\eta _{n}^{1}\right) \delta \left\Vert x_{n}-x_{\ast
}\right\Vert +\eta _{n}^{1}\delta \left\Vert z_{n}-x_{\ast }\right\Vert 
\notag \\
&\leq &\delta \left\{ 1-\eta _{n}^{1}\eta _{n}^{2}\left( 1-\delta \right)
\right\} \left\Vert x_{n}-x_{\ast }\right\Vert \text{,}  \label{eqn20}
\end{eqnarray}%
\begin{eqnarray}
\left\Vert x_{n+1}-x_{\ast }\right\Vert &\leq &\left\Vert Ty_{n}-Tx_{\ast
}\right\Vert \leq \delta \left\Vert y_{n}-x_{\ast }\right\Vert  \notag \\
&\leq &\delta ^{2}\left\{ 1-\eta _{n}^{1}\eta _{n}^{2}\left( 1-\delta
\right) \right\} \left\Vert x_{n}-x_{\ast }\right\Vert \text{.}
\label{eqn21}
\end{eqnarray}%
Repetition of above processes gives the following inequalities%
\begin{equation}
\left\{ 
\begin{array}{c}
\left\Vert x_{n+1}-x_{\ast }\right\Vert \leq \delta ^{2}\left\{ 1-\eta
_{n}^{1}\eta _{n}^{2}\left( 1-\delta \right) \right\} \left\Vert
x_{n}-x_{\ast }\right\Vert \text{, \ \ \ \ \ \ \ \ \ \ \ } \\ 
\left\Vert x_{n}-x_{\ast }\right\Vert \leq \delta ^{2}\left\{ 1-\eta
_{n-1}^{1}\eta _{n-1}^{2}\left( 1-\delta \right) \right\} \left\Vert
x_{n-1}-x_{\ast }\right\Vert \text{,} \\ 
\left\Vert x_{n-1}-x_{\ast }\right\Vert \leq \delta ^{2}\left\{ 1-\eta
_{n-2}^{1}\eta _{n-2}^{2}\left( 1-\delta \right) \right\} \left\Vert
x_{n-2}-x_{\ast }\right\Vert \text{, \ \ } \\ 
\vdots \text{ \ \ \ \ \ \ \ \ \ \ \ \ \ \ \ \ \ \ \ \ \ \ \ \ \ \ \ \ \ \ \
\ \ \ \ \ \ } \\ 
\left\Vert x_{1}-x_{\ast }\right\Vert \leq \delta ^{2}\left\{ 1-\eta
_{0}^{1}\eta _{0}^{2}\left( 1-\delta \right) \right\} \left\Vert
x_{0}-x_{\ast }\right\Vert \text{. \ \ \ \ \ \ \ \ }%
\end{array}%
\right.  \label{eqn22}
\end{equation}%
From inequalities given by (2.4), we derive%
\begin{equation}
\left\Vert x_{n+1}-x_{\ast }\right\Vert \leq \left\Vert x_{0}-x_{\ast
}\right\Vert \delta ^{2\left( n+1\right) }\prod\limits_{k=0}^{n}\left\{
1-\eta _{k}^{1}\eta _{k}^{2}\left( 1-\delta \right) \right\} \text{.}
\label{eqn23}
\end{equation}%
Since $\delta \in \left( 0,1\right) $ and $\eta _{n}^{i}\in \left[ 0,1\right]
$, for all $n\in 
\mathbb{N}
$ and for each $i\in \left\{ 1,2\right\} $%
\begin{equation}
1-\eta _{n}^{1}\eta _{n}^{2}\left( 1-\delta \right) <1  \label{eqn24}
\end{equation}%
It is well-known from the classical analysis that $1-x\leq e^{-x}$ for all $%
x\in \left[ 0,1\right] $. Taking into account these facts together with
(2.5), we obtain%
\begin{equation}
\left\Vert x_{n+1}-x_{\ast }\right\Vert \leq \left\Vert x_{0}-x_{\ast
}\right\Vert \delta ^{2\left( n+1\right) }e^{-\left( 1-\delta \right)
\sum_{k=0}^{n}\eta _{k}^{1}\eta _{k}^{2}}\text{.}  \label{eqn25}
\end{equation}%
Taking the limit of both sides of inequality (2.7) yields $%
\lim_{n\rightarrow \infty }\left\Vert x_{n}-x_{\ast }\right\Vert =0$, i.e. $%
x_{n}\rightarrow x_{\ast }$ as $n\rightarrow \infty $.
\end{proof}

\begin{theorem}
Let $D$, $B$ and $T$ with fixed point $x_{\ast }$ be as in Theorem 1. Let $%
\{u_{n}\}_{n=0}^{\infty }$, $\{x_{n}\}_{n=0}^{\infty }$be two iterative
sequences defined by CR (1.10) and Picard-S (1.14) iteration methods,
respectively, with real sequences $\left\{ \eta _{n}^{i}\right\}
_{n=0}^{\infty }$, $i\in \left\{ 1,2,3\right\} $ in $\left[ 0,1\right] $
satisfying $\sum_{k=0}^{n}\eta _{k}^{1}\eta _{k}^{2}=\infty $. Then the
following are equivalent:

(i) $\lim_{n\rightarrow \infty }\left\Vert x_{n}-x_{\ast }\right\Vert =0$;

(ii) $\lim_{n\rightarrow \infty }\left\Vert u_{n}-x_{\ast }\right\Vert =0$.
\end{theorem}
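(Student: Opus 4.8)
The plan is to prove the two implications by the standard equivalence scheme: in each direction I show that the gap sequence $\{\|x_n-u_n\|\}_{n=0}^{\infty}$ tends to $0$, and the triangle inequality then converts the assumed convergence into the other one. I use the control parameters $\eta_n^1,\eta_n^2$ that the two schemes share, writing the extra (outermost) parameter of the CR method as $\eta_n^3$, so that CR reads $u_{n+1}=(1-\eta_n^3)v_n+\eta_n^3Tv_n$, $v_n=(1-\eta_n^1)Tu_n+\eta_n^1Tw_n$, $w_n=(1-\eta_n^2)u_n+\eta_n^2Tu_n$, which mirrors exactly the block structure $x_{n+1}=Ty_n$, $y_n=(1-\eta_n^1)Tx_n+\eta_n^1Tz_n$, $z_n=(1-\eta_n^2)x_n+\eta_n^2Tx_n$ of Picard--S. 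The first step is to repeat the computations that led to (2.1) and (2.2), now applied to the pairs $(z_n,w_n)$ and $(y_n,v_n)$ instead of to distances from $x_\ast$; this produces the two unconditional estimates
\[
\|z_n-w_n\|\le\left[1-\eta_n^2(1-\delta)\right]\|x_n-u_n\|,\qquad \|y_n-v_n\|\le\delta\left[1-\eta_n^1\eta_n^2(1-\delta)\right]\|x_n-u_n\|.
\]

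For $(i)\Rightarrow(ii)$ I assume $\|x_n-x_\ast\|\to0$. Since $T$ is a $\delta$-contraction fixing $x_\ast$, this forces in turn $\|Tx_n-x_\ast\|\to0$, $\|z_n-x_\ast\|\to0$, $\|y_n-x_\ast\|\to0$, and hence $\|y_n-Ty_n\|\le(1+\delta)\|y_n-x_\ast\|\to0$. Writing $u_{n+1}-x_{n+1}=(1-\eta_n^3)(v_n-Ty_n)+\eta_n^3(Tv_n-Ty_n)$, inserting $v_n-Ty_n=(v_n-y_n)+(y_n-Ty_n)$, and applying (1.4) together with the two estimates above, I obtain
\[
\|u_{n+1}-x_{n+1}\|\le\left[1-\eta_n^3(1-\delta)\right]\delta\left[1-\eta_n^1\eta_n^2(1-\delta)\right]\|u_n-x_n\|+\|y_n-Ty_n\|.
\]
The coefficient of $\|u_n-x_n\|$ is at most $\delta<1$, so setting $\lambda_n$ to be one minus that coefficient (whence $\lambda_n\ge1-\delta>0$ and $\sum\lambda_n=\infty$) and $\rho_n:=\|y_n-Ty_n\|$ (whence $\rho_n/\lambda_n\le\rho_n/(1-\delta)\to0$), Lemma 1 gives $\|u_n-x_n\|\to0$, and therefore $\|u_n-x_\ast\|\le\|u_n-x_n\|+\|x_n-x_\ast\|\to0$.

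For $(ii)\Rightarrow(i)$ the argument is the mirror image. Assuming $\|u_n-x_\ast\|\to0$ yields in turn $\|v_n-x_\ast\|\to0$ and $\|v_n-Tv_n\|\le(1+\delta)\|v_n-x_\ast\|\to0$; then, writing $x_{n+1}-u_{n+1}=(1-\eta_n^3)(Ty_n-v_n)+\eta_n^3(Ty_n-Tv_n)$, inserting $Ty_n-v_n=(Ty_n-Tv_n)+(Tv_n-v_n)$, and using (1.4) with the two estimates, one gets $\|x_{n+1}-u_{n+1}\|\le\delta^2\left[1-\eta_n^1\eta_n^2(1-\delta)\right]\|x_n-u_n\|+\|v_n-Tv_n\|$. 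The coefficient is now at most $\delta^2<1$, so Lemma 1 again gives $\|x_n-u_n\|\to0$, whence $\|x_n-x_\ast\|\le\|x_n-u_n\|+\|u_n-x_\ast\|\to0$.

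The only real work is the bookkeeping in these two expansions: carrying the convex-combination identity $(1-\eta)a+\eta b-c=(1-\eta)(a-c)+\eta(b-c)$ cleanly through the nested steps, and verifying that the single residual term left in each case ($\|y_n-Ty_n\|$ in the first, $\|v_n-Tv_n\|$ in the second) is genuinely driven to $0$ by the hypothesis so that it can serve as the perturbation $\rho_n$ in Lemma 1. I anticipate no conceptual obstacle: because each comparison coefficient collapses to a quantity bounded by $\delta$ (respectively $\delta^2$), the divergence requirement $\sum\lambda_n=\infty$ needed for Lemma 1 holds automatically, and the standing assumption $\sum_{k=0}^{n}\eta_k^1\eta_k^2=\infty$ enters only as the hypothesis inherited from Theorem 1.
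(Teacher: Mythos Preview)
Your proof is correct and follows the same overall scheme as the paper: the same estimates $\|z_n-w_n\|\le[1-\eta_n^2(1-\delta)]\|x_n-u_n\|$ and $\|y_n-v_n\|\le\delta[1-\eta_n^1\eta_n^2(1-\delta)]\|x_n-u_n\|$, the same residual terms $\|y_n-Ty_n\|$ and $\|v_n-Tv_n\|$, and the same appeal to Lemma~1 followed by the triangle inequality. The one substantive difference is in how you feed the recursion into Lemma~1. The paper, after reaching the coefficient $\delta[1-\eta_n^0(1-\delta)][1-\eta_n^1\eta_n^2(1-\delta)]$, discards the factor $\delta[1-\eta_n^0(1-\delta)]<1$ and sets $\lambda_n=\eta_n^1\eta_n^2(1-\delta)$, so that the divergence $\sum\lambda_n=\infty$ comes from the standing hypothesis $\sum\eta_k^1\eta_k^2=\infty$; it then asserts $\rho_n/\lambda_n\to0$, which is delicate if $\eta_n^1\eta_n^2$ is not bounded below. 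You instead keep the full coefficient, observe it is at most $\delta$ (respectively $\delta^2$), and take $\lambda_n\ge1-\delta$; this makes both $\sum\lambda_n=\infty$ and $\rho_n/\lambda_n\to0$ automatic, so that the hypothesis $\sum\eta_k^1\eta_k^2=\infty$ is genuinely unused in the equivalence step, exactly as you note. Your route is therefore a mild but real simplification of the paper's argument.
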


\begin{proof}
We will prove (i)$\Rightarrow $(ii), that is, if Picard-S iteration method
(1.14) converges to $x_{\ast }$, then CR iteration method (1.10) does too.
Now by using (1.14), (1.10) and condition (1.4), we have%
\begin{eqnarray}
\left\Vert x_{n+1}-u_{n+1}\right\Vert &=&\left\Vert Ty_{n}-\left( 1-\eta
_{n}^{0}\right) v_{n}-\eta _{n}^{0}Tv_{n}\right\Vert  \notag \\
&=&\left\Vert \left( 1-\eta _{n}^{0}+\eta _{n}^{0}\right) Ty_{n}-\left(
1-\eta _{n}^{0}\right) v_{n}-\eta _{n}^{0}Tv_{n}\right\Vert  \notag \\
&\leq &\left( 1-\eta _{n}^{0}\right) \left\Vert Ty_{n}-v_{n}\right\Vert
+\eta _{n}^{0}\left\Vert Ty_{n}-Tv_{n}\right\Vert  \notag \\
&\leq &\left[ 1-\eta _{n}^{0}\left( 1-\delta \right) \right] \left\Vert
y_{n}-v_{n}\right\Vert +\left( 1-\eta _{n}^{0}\right) \left\Vert
y_{n}-Ty_{n}\right\Vert  \label{eqn26}
\end{eqnarray}%
\begin{eqnarray}
\left\Vert y_{n}-v_{n}\right\Vert &=&\left\Vert \left( 1-\eta
_{n}^{1}\right) Tx_{n}+\eta _{n}^{1}Tz_{n}-\left( 1-\eta _{n}^{1}\right)
Tu_{n}-\eta _{n}^{1}Tw_{n}\right\Vert  \notag \\
&\leq &\left( 1-\eta _{n}^{1}\right) \left\Vert Tx_{n}-Tu_{n}\right\Vert
+\eta _{n}^{1}\left\Vert Tz_{n}-Tw_{n}\right\Vert  \notag \\
&\leq &\left( 1-\eta _{n}^{1}\right) \delta \left\Vert
x_{n}-u_{n}\right\Vert +\eta _{n}^{1}\delta \left\Vert z_{n}-w_{n}\right\Vert
\label{eqn27}
\end{eqnarray}%
\begin{eqnarray}
\left\Vert z_{n}-w_{n}\right\Vert &=&\left\Vert \left( 1-\eta
_{n}^{2}\right) x_{n}+\eta _{n}^{2}Tx_{n}-\left( 1-\eta _{n}^{2}\right)
u_{n}-\eta _{n}^{2}Tu_{n}\right\Vert  \notag \\
&\leq &\left( 1-\eta _{n}^{2}\right) \left\Vert x_{n}-u_{n}\right\Vert +\eta
_{n}^{2}\left\Vert Tx_{n}-Tu_{n}\right\Vert  \notag \\
&\leq &\left[ 1-\eta _{n}^{2}\left( 1-\delta \right) \right] \left\Vert
x_{n}-u_{n}\right\Vert  \label{eqn28}
\end{eqnarray}%
Combining (2.8), (2.9), and (2.10)%
\begin{eqnarray}
\left\Vert x_{n+1}-u_{n+1}\right\Vert &\leq &\delta \left[ 1-\eta
_{n}^{0}\left( 1-\delta \right) \right] \left[ 1-\eta _{n}^{1}\eta
_{n}^{2}\left( 1-\delta \right) \right] \left\Vert x_{n}-u_{n}\right\Vert 
\notag \\
&&+\left( 1-\eta _{n}^{0}\right) \left\Vert y_{n}-Ty_{n}\right\Vert
\label{eqn29}
\end{eqnarray}%
Since $\delta \in \left( 0,1\right) $ and $\eta _{n}^{i}\in \left[ 0,1\right]
$, for all $n\in 
\mathbb{N}
$ and for each $i\in \left\{ 0,1,2\right\} $%
\begin{equation}
\delta \left[ 1-\eta _{n}^{0}\left( 1-\delta \right) \right] <1\text{.}
\label{eqn30}
\end{equation}%
An application of the inequalitiy (2.12) to (2.11) yields%
\begin{equation}
\left\Vert x_{n+1}-u_{n+1}\right\Vert \leq \left[ 1-\eta _{n}^{1}\eta
_{n}^{2}\left( 1-\delta \right) \right] \left\Vert x_{n}-u_{n}\right\Vert
+\left( 1-\eta _{n}^{0}\right) \left\Vert y_{n}-Ty_{n}\right\Vert \text{.}
\label{eqn31}
\end{equation}%
Define%
\begin{equation}
\beta _{n}:=\left\Vert x_{n}-u_{n}\right\Vert \text{, }\lambda _{n}:=\eta
_{n}^{1}\eta _{n}^{2}\left( 1-\delta \right) \in \left( 0,1\right) \text{, }%
\rho _{n}:=\left( 1-\eta _{n}^{0}\right) \left\Vert y_{n}-Ty_{n}\right\Vert 
\text{.}  \label{eqn32}
\end{equation}%
Since $\lim_{n\rightarrow \infty }\left\Vert x_{n}-x_{\ast }\right\Vert =0$
and $Tx_{\ast }=x_{\ast }$, $\lim_{n\rightarrow \infty }\left\Vert
y_{n}-Ty_{n}\right\Vert =0$ which implies $\frac{\rho _{n}}{\lambda _{n}}%
\rightarrow 0$ as $n\rightarrow \infty $. Thus all conditions of Lemma 1 are
fulfilled by (2.13), and so $\lim_{n\rightarrow \infty }\left\Vert
x_{n}-u_{n}\right\Vert =0$. Since%
\begin{equation}
\left\Vert u_{n}-x_{\ast }\right\Vert \leq \left\Vert x_{n}-u_{n}\right\Vert
+\left\Vert x_{n}-x_{\ast }\right\Vert \text{,}  \label{eqn33}
\end{equation}%
$\lim_{n\rightarrow \infty }\left\Vert u_{n}-x_{\ast }\right\Vert =0$.

Next we will prove (ii)$\Rightarrow $(i). Using (1.14), (1.10) and condition
(1.4), we have%
\begin{eqnarray}
\left\Vert u_{n+1}-x_{n+1}\right\Vert &=&\left\Vert v_{n}-Ty_{n}-\eta
_{n}^{0}\left( v_{n}-Tv_{n}\right) \right\Vert  \notag \\
&\leq &\left\Vert v_{n}-Ty_{n}\right\Vert +\eta _{n}^{0}\left\Vert
v_{n}-Tv_{n}\right\Vert  \notag \\
&\leq &\left\Vert Tv_{n}-Ty_{n}\right\Vert +\left( 1+\eta _{n}^{0}\right)
\left\Vert v_{n}-Tv_{n}\right\Vert  \notag \\
&\leq &\delta \left\Vert v_{n}-y_{n}\right\Vert +\left( 1+\eta
_{n}^{0}\right) \left\Vert v_{n}-Tv_{n}\right\Vert \text{,}  \label{eqn34}
\end{eqnarray}%
\begin{eqnarray}
\left\Vert v_{n}-y_{n}\right\Vert &\leq &\left( 1-\eta _{n}^{1}\right)
\left\Vert Tu_{n}-Tx_{n}\right\Vert +\eta _{n}^{1}\left\Vert
Tw_{n}-Tz_{n}\right\Vert  \notag \\
&\leq &\delta \left\{ \left( 1-\eta _{n}^{1}\right) \left\Vert
u_{n}-x_{n}\right\Vert +\eta _{n}^{1}\left\Vert w_{n}-z_{n}\right\Vert
\right\} \text{,}  \label{eqn35}
\end{eqnarray}%
\begin{eqnarray}
\left\Vert w_{n}-z_{n}\right\Vert &\leq &\left( 1-\eta _{n}^{2}\right)
\left\Vert u_{n}-x_{n}\right\Vert +\eta _{n}^{2}\left\Vert
Tu_{n}-Tx_{n}\right\Vert  \notag \\
&\leq &\left[ 1-\eta _{n}^{2}\left( 1-\delta \right) \right] \left\Vert
u_{n}-x_{n}\right\Vert \text{.}  \label{eqn36}
\end{eqnarray}%
Combining (2.16), (2.17), and (2.18)%
\begin{eqnarray}
\left\Vert u_{n+1}-x_{n+1}\right\Vert &\leq &\delta ^{2}\left[ 1-\eta
_{n}^{1}\eta _{n}^{2}\left( 1-\delta \right) \right] \left\Vert
u_{n}-x_{n}\right\Vert +\left( 1+\eta _{n}^{0}\right) \left\Vert
v_{n}-Tv_{n}\right\Vert  \notag \\
&\leq &\left[ 1-\eta _{n}^{1}\eta _{n}^{2}\left( 1-\delta \right) \right]
\left\Vert u_{n}-x_{n}\right\Vert +\left( 1+\eta _{n}^{0}\right) \left\Vert
v_{n}-Tv_{n}\right\Vert \text{.}  \label{eqn37}
\end{eqnarray}%
Denote that%
\begin{equation}
\beta _{n}:=\left\Vert u_{n}-x_{n}\right\Vert \text{, }\lambda _{n}:=\eta
_{n}^{1}\eta _{n}^{2}\left( 1-\delta \right) \in \left( 0,1\right) \text{, }%
\rho _{n}:=\left( 1+\eta _{n}^{0}\right) \left\Vert v_{n}-Tv_{n}\right\Vert 
\text{.}  \label{eqn38}
\end{equation}%
Since $\lim_{n\rightarrow \infty }\left\Vert u_{n}-x_{\ast }\right\Vert =0$
and $Tx_{\ast }=x_{\ast }$, $\lim_{n\rightarrow \infty }\left\Vert
v_{n}-Tv_{n}\right\Vert =0$ which implies $\frac{\rho _{n}}{\lambda _{n}}%
\rightarrow 0$ as $n\rightarrow \infty $. Hence an application of Lemma 1 to
(2.19) yields $\lim_{n\rightarrow \infty }\left\Vert u_{n}-x_{n}\right\Vert
=0$. Since%
\begin{equation}
\left\Vert x_{n}-x_{\ast }\right\Vert \leq \left\Vert u_{n}-x_{n}\right\Vert
+\left\Vert u_{n}-x_{\ast }\right\Vert \text{,}  \label{eqn39}
\end{equation}%
$\lim_{n\rightarrow \infty }\left\Vert x_{n}-x_{\ast }\right\Vert =0$.
\end{proof}

\begin{theorem}
Let $D$, $B$ and $T$ with fixed point $x_{\ast }$ be as in Theorem 1. Let $%
\left\{ \eta _{n}^{i}\right\} _{n=0}^{\infty }$, $i\in \left\{ 0,1,2\right\} 
$ be real sequences in $\left[ 0,1\right] $ satisfying

(i) $\eta _{0}\leq \eta _{n}^{0}\leq 1$, $\eta _{1}\leq \eta _{n}^{1}\leq 1$
and $\eta _{2}\leq \eta _{n}^{2}\leq 1$, for all $n\in 
\mathbb{N}
$ and for some $\eta _{0}$, $\eta _{1}$, $\eta _{2}>0$.

For given $x_{0}=u_{0}\in D$, consider iterative sequences $\left\{
u_{n}\right\} _{n=0}^{\infty }$ and $\left\{ x_{n}\right\} _{n=0}^{\infty }$
defined by (1.10) and (1.14), respectively. Then $\left\{ x_{n}\right\}
_{n=0}^{\infty }$ converges to $x_{\ast }$ faster than $\left\{
u_{n}\right\} _{n=0}^{\infty }$ does.
\end{theorem}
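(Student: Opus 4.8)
The plan is to produce, for each of the two iterations, the sharpest exponential error estimate that the contraction hypothesis (1.4) furnishes under the control condition (i), and then to compare these estimates in the sense of Definitions 1 and 2. We may assume $x_{0}=u_{0}\neq x_{\ast }$, since otherwise both sequences are identically $x_{\ast }$ and there is nothing to prove. For the Picard-S sequence I would reuse estimate (2.5) from the proof of Theorem 1, which (with the index shifted) reads
\[
\left\Vert x_{n}-x_{\ast }\right\Vert \leq \left\Vert x_{0}-x_{\ast }\right\Vert \,\delta ^{2n}\prod_{k=0}^{n-1}\left\{ 1-\eta _{k}^{1}\eta _{k}^{2}\left( 1-\delta \right) \right\} .
\]
Condition (i) forces $\eta _{k}^{1}\eta _{k}^{2}\geq \eta _{1}\eta _{2}>0$, hence $1-\eta _{k}^{1}\eta _{k}^{2}(1-\delta )\leq 1-\eta _{1}\eta _{2}(1-\delta )$, and so
\[
\left\Vert x_{n}-x_{\ast }\right\Vert \leq \left\Vert x_{0}-x_{\ast }\right\Vert \,\delta ^{2n}\left[ 1-\eta _{1}\eta _{2}(1-\delta )\right] ^{n}=:a_{n}.
\]

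For the CR sequence I would run the analogous three-step estimate on (1.10): from $\left\Vert w_{n}-x_{\ast }\right\Vert \leq \left[ 1-\eta _{n}^{2}(1-\delta )\right] \left\Vert u_{n}-x_{\ast }\right\Vert $, then $\left\Vert v_{n}-x_{\ast }\right\Vert \leq \delta \left[ 1-\eta _{n}^{1}\eta _{n}^{2}(1-\delta )\right] \left\Vert u_{n}-x_{\ast }\right\Vert $, and finally $\left\Vert u_{n+1}-x_{\ast }\right\Vert \leq \left[ 1-\eta _{n}^{0}(1-\delta )\right] \left\Vert v_{n}-x_{\ast }\right\Vert $, one gets
\[
\left\Vert u_{n+1}-x_{\ast }\right\Vert \leq \delta \left[ 1-\eta _{n}^{0}(1-\delta )\right] \left[ 1-\eta _{n}^{1}\eta _{n}^{2}(1-\delta )\right] \left\Vert u_{n}-x_{\ast }\right\Vert .
\]
Iterating and invoking condition (i) again (now also $\eta _{k}^{0}\geq \eta _{0}>0$) yields
\[
\left\Vert u_{n}-x_{\ast }\right\Vert \leq \left\Vert u_{0}-x_{\ast }\right\Vert \,\delta ^{n}\left[ 1-\eta _{0}(1-\delta )\right] ^{n}\left[ 1-\eta _{1}\eta _{2}(1-\delta )\right] ^{n}=:b_{n}.
\]

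It then remains to verify that $\{a_{n}\}$ tends to $0$ faster than $\{b_{n}\}$. Since $\left\Vert x_{0}-x_{\ast }\right\Vert =\left\Vert u_{0}-x_{\ast }\right\Vert \neq 0$, the common factors cancel and
\[
\frac{a_{n}}{b_{n}}=\left( \frac{\delta }{1-\eta _{0}(1-\delta )}\right) ^{n}.
\]
The elementary identity $1-\eta _{0}(1-\delta )-\delta =(1-\delta )(1-\eta _{0})$, together with $\delta \in (0,1)$ and $\eta _{0}\in (0,1)$, shows $0<\delta <1-\eta _{0}(1-\delta )$, so the bracketed ratio lies in $(0,1)$ and $a_{n}/b_{n}\to 0$. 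By Definition 1 the bound $\{a_{n}\}$ converges faster than $\{b_{n}\}$; since $\left\Vert x_{n}-x_{\ast }\right\Vert \leq a_{n}$ and $\left\Vert u_{n}-x_{\ast }\right\Vert \leq b_{n}$, Definition 2 then gives that $\{x_{n}\}$ converges to $x_{\ast }$ faster than $\{u_{n}\}$.

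The bulk of the work — the two contraction chains — is routine, essentially a repetition of the computations already carried out in Theorems 1 and 2. The one point that carries the theorem is the mismatch between the factor $\delta ^{2n}$ in $a_{n}$ (the Picard-S update applies $T$ to a convex combination of $T$-images, so two factors of $\delta$ are gained per iteration) and the factor $\delta ^{n}[1-\eta _{0}(1-\delta )]^{n}$ in $b_{n}$; the decisive gap is exactly $(1-\delta )(1-\eta _{0})>0$, which is where the strictness $\delta <1$ and the hypothesis $\eta _{0}<1$ enter. I would note explicitly that when $\eta _{0}=1$ the two estimates are of the same order, so the strict speed-up relies on $\eta _{0}<1$, the generic situation for the CR parameters.
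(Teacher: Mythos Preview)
Your proof is correct and follows essentially the same route as the paper: you reuse estimate (2.5) for the Picard-S sequence, derive the analogous product bound for the CR sequence, apply the lower bounds from (i) to freeze the factors, and then show that $a_{n}/b_{n}=\left(\delta/[1-\eta_{0}(1-\delta)]\right)^{n}\to 0$. The only cosmetic differences are that the paper cites the CR estimate from \cite{K.kaya} rather than deriving it, and invokes the ratio test on $\theta_{n}$ rather than observing directly that a geometric sequence with ratio in $(0,1)$ tends to zero; your explicit remark that the argument needs $\eta_{0}<1$ (equivalently, that the two schemes coincide when $\eta_{n}^{0}\equiv 1$) is a welcome clarification that the paper leaves implicit.
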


\begin{proof}
The following inequality comes from inequality (2.5) of Theorem 1%
\begin{equation}
\left\Vert x_{n+1}-x_{\ast }\right\Vert \leq \left\Vert x_{0}-x_{\ast
}\right\Vert \delta ^{2\left( n+1\right) }\prod\limits_{k=0}^{n}\left\{
1-\eta _{k}^{1}\eta _{k}^{2}\left( 1-\delta \right) \right\} \text{.}
\label{eqn40}
\end{equation}%
The following inequality is due to (\cite{K.kaya}, inequality (2.26) of
Theorem 3)%
\begin{equation}
\left\Vert u_{n+1}-x_{\ast }\right\Vert \leq \left\Vert u_{0}-x_{\ast
}\right\Vert \prod\limits_{k=0}^{n}\left[ 1-\eta _{k}^{0}\left( 1-\delta
\right) \right] \left[ 1-\eta _{n}^{1}\eta _{n}^{2}\left( 1-\delta \right) %
\right] \delta \text{,}  \label{eqn41}
\end{equation}%
which is obtained from (1.10).

An application of assumption (i) to (2.22) and (2.23), respectively, leads to%
\begin{eqnarray}
\left\Vert x_{n+1}-x_{\ast }\right\Vert &\leq &\left\Vert x_{0}-x_{\ast
}\right\Vert \delta ^{2\left( n+1\right) }\prod\limits_{k=0}^{n}\left\{
1-\eta _{1}\eta _{2}\left( 1-\delta \right) \right\}  \notag \\
&=&\left\Vert x_{0}-x_{\ast }\right\Vert \delta ^{2\left( n+1\right) }\left[
1-\eta _{1}\eta _{2}\left( 1-\delta \right) \right] ^{n+1}\text{,}
\label{eqn42}
\end{eqnarray}%
\begin{eqnarray}
\left\Vert u_{n+1}-x_{\ast }\right\Vert &\leq &\left\Vert u_{0}-x_{\ast
}\right\Vert \prod\limits_{k=0}^{n}\left[ 1-\eta _{0}\left( 1-\delta \right) %
\right] \left[ 1-\eta _{1}\eta _{2}\left( 1-\delta \right) \right] \delta 
\notag \\
&=&\left\Vert u_{0}-x_{\ast }\right\Vert \delta ^{n+1}\left[ 1-\eta
_{0}\left( 1-\delta \right) \right] ^{n+1}\left[ 1-\eta _{1}\eta _{2}\left(
1-\delta \right) \right] ^{n+1}\text{.}  \label{eqn43}
\end{eqnarray}%
Define%
\begin{equation}
a_{n}:=\delta ^{2\left( n+1\right) }\left[ 1-\eta _{1}\eta _{2}\left(
1-\delta \right) \right] ^{n+1}\left\Vert x_{0}-x_{\ast }\right\Vert \text{,}
\label{eqn44}
\end{equation}%
\begin{equation}
b_{n}:=\delta ^{n+1}\left[ 1-\eta _{0}\left( 1-\delta \right) \right] ^{n+1}%
\left[ 1-\eta _{1}\eta _{2}\left( 1-\delta \right) \right] ^{n+1}\left\Vert
u_{0}-x_{\ast }\right\Vert \text{,}  \label{eqn45}
\end{equation}%
\begin{eqnarray}
\theta _{n} &:&=\frac{a_{n}}{b_{n}}=\frac{\delta ^{2\left( n+1\right) }\left[
1-\eta _{1}\eta _{2}\left( 1-\delta \right) \right] ^{n+1}\left\Vert
x_{0}-x_{\ast }\right\Vert }{\delta ^{n+1}\left[ 1-\eta _{0}\left( 1-\delta
\right) \right] ^{n+1}\left[ 1-\eta _{1}\eta _{2}\left( 1-\delta \right) %
\right] ^{n+1}\left\Vert u_{0}-x_{\ast }\right\Vert }  \notag \\
&:&=\left[ \frac{\delta }{1-\eta _{0}\left( 1-\delta \right) }\right] ^{n+1}%
\text{.}  \label{eqn46}
\end{eqnarray}%
Since $\delta $, $\eta _{0}\in \left( 0,1\right) $%
\begin{eqnarray}
\eta _{0} &<&1  \notag \\
&\Rightarrow &\eta _{0}\left( 1-\delta \right) <1-\delta  \notag \\
&\Rightarrow &\delta <1-\eta _{0}\left( 1-\delta \right)  \notag \\
&\Rightarrow &\frac{\delta }{1-\eta _{0}\left( 1-\delta \right) }<1\text{.}
\label{eqn47}
\end{eqnarray}%
Therefore, we have%
\begin{equation}
\lim_{n\rightarrow \infty }\frac{\theta _{n+1}}{\theta _{n}}%
=\lim_{n\rightarrow \infty }\frac{\left[ \frac{\delta }{1-\eta _{0}\left(
1-\delta \right) }\right] ^{n+2}}{\left[ \frac{\delta }{1-\eta _{0}\left(
1-\delta \right) }\right] ^{n+1}}=\frac{\delta }{1-\eta _{0}\left( 1-\delta
\right) }<1\text{.}  \label{eqn48}
\end{equation}%
It thus follows from well-known ratio test that $\sum\limits_{n=0}^{\infty
}\theta _{n}<\infty $. Hence, we have $\lim_{n\rightarrow \infty }\theta
_{n}=0$ which implies that $\left\{ x_{n}\right\} _{n=0}^{\infty }$ is
faster than $\left\{ u_{n}\right\} _{n=0}^{\infty }$.
\end{proof}

In order to support analytical proof of Theorem 3 and to illustrate the
efficiency of Picard-S iteration method (1.14), we will use a numerical
example provided by Sahu \cite{Sahu} for the sake of consistent comparison.

\begin{example}
Let $B=%
\mathbb{R}
$ and $D=\left[ 0,\infty \right) $. Let $T:D\rightarrow D$ be a mapping
defined by $Tx=\sqrt[3]{3x+18}$ for all $x\in D$. $T$ is a contraction with
contractivity factor $\delta =1/\sqrt[3]{18}$ and $x_{\ast }=3$, see \cite%
{Sahu}. Take $\eta _{n}^{0}=\eta _{k}^{1}=\eta _{k}^{2}=1/2$ with initial
value $x_{0}=1000$. The following tables 1-2-3 show that Picard-S iteration
method (1.14) converges faster than all Picard (1.3), Mann (1.5), Ishikawa
(1.6), Noor (1.7), SP (1.8), S (1.9) , CR (1.10), Normal-S \cite{SahuPet},
S* \cite{Karahan} and Two-Step Mann \cite{iam} iteration methods including a
new three-step iteration method due to Abbas and Nazir \cite{Abbas}.
\end{example}

\begin{center}
\begin{tabular}{ccccc}
\multicolumn{5}{c}{\ \ Table 1. Comparison speed of convergence among
various iteration methods} \\ 
No. of Iter. & Picard-S & S & Normal-S & S* \\ 
1 & 3.848449787 & 12.99923955 & 11.54719590 & 9.149866494 \\ 
2 & 3.007911860 & 3.679603368 & 3.446604313 & 3.309032025 \\ 
3 & 3.000075950 & 3.057482809 & 3.027275056 & 3.018443229 \\ 
4 & 3.000000729 & 3.004958405 & 3.001682532 & 3.001112415 \\ 
5 & 3.000000007 & 3.000428434 & 3.000103856 & 3.000067138 \\ 
6 & 3.000000000 & 3.000037025 & 3.000006411 & 3.000004052 \\ 
7 & 3.000000000 & 3.000003200 & 3.000000396 & 3.000000245 \\ 
8 & 3.000000000 & 3.000000276 & 3.000000024 & 3.000000016 \\ 
9 & 3.000000000 & 3.000000024 & 3.000000001 & 3.000000001 \\ 
10 & 3.000000000 & 3.000000002 & 3.000000000 & 3.000000000 \\ 
11 & 3.000000000 & 3.000000000 & 3.000000000 & 3.000000000 \\ 
$\vdots $ & $\vdots $ & $\vdots $ & $\vdots $ & $\vdots $%
\end{tabular}

\begin{tabular}{ccccc}
\multicolumn{5}{c}{Table 2. Comparison speed of convergence among various
iteration methods} \\ 
No. of Iter. & Picard & SP & CR & Abbas \& Nazir \\ 
1 & 14.45128320 & 134.3273583 & 8.423844669 & 7.697822844 \\ 
2 & 3.944094141 & 21.81696908 & 3.224582695 & 3.150763238 \\ 
3 & 3.101431265 & 5.994481952 & 3.010704011 & 3.005356237 \\ 
4 & 3.011228065 & 3.504188621 & 3.000513730 & 3.000191005 \\ 
5 & 3.001247045 & 3.086158574 & 3.000024664 & 3.000006812 \\ 
6 & 3.000138554 & 3.014764667 & 3.000001184 & 3.000000242 \\ 
7 & 3.000015395 & 3.002531406 & 3.000000057 & 3.000000010 \\ 
8 & 3.000001710 & 3.000434048 & 3.000000002 & 3.000000000 \\ 
9 & 3.000000190 & 3.000074424 & 3.000000000 & 3.000000000 \\ 
10 & 3.000000021 & 3.000012761 & 3.000000000 & 3.000000000 \\ 
11 & 3.000000002 & 3.000002187 & 3.000000000 & 3.000000000 \\ 
12 & 3.000000000 & 3.000000376 & 3.000000000 & 3.000000000 \\ 
13 & 3.000000000 & 3.000000064 & 3.000000000 & 3.000000000 \\ 
14 & 3.000000000 & 3.000000011 & 3.000000000 & 3.000000000 \\ 
15 & 3.000000000 & 3.000000002 & 3.000000000 & 3.000000000 \\ 
16 & 3.000000000 & 3.000000000 & 3.000000000 & 3.000000000 \\ 
$\vdots $ & $\vdots $ & $\vdots $ & $\vdots $ & $\vdots $%
\end{tabular}

\begin{tabular}{ccccc}
\multicolumn{5}{c}{Table 3. Comparison speed of convergence among various
iteration methods} \\ 
No. of Iter. & Mann & Ishikawa & Noor & Two-step Mann \\ 
1 & 507.2256416 & 505.7735980 & 505.7681478 & 259.3864188 \\ 
2 & 259.3864188 & 257.5166864 & 257.5072981 & 70.91103158 \\ 
3 & 134.3273583 & 132.4972579 & 132.4845970 & 21.81696908 \\ 
4 & 70.91103158 & 69.30291326 & 69.28740168 & 8.474131740 \\ 
5 & 38.52222997 & 37.19141418 & 37.17367673 & 4.647951504 \\ 
6 & 21.81696908 & 20.76243714 & 20.74360422 & 3.504188621 \\ 
7 & 13.09346232 & 12.28975230 & 12.27143253 & 3.155173770 \\ 
8 & 8.474131740 & 7.884743216 & 7.868548900 & 3.047850681 \\ 
9 & 5.994481952 & 5.578322215 & 5.565269205 & 3.014764667 \\ 
10 & 4.647951504 & 4.364258381 & 4.354541966 & 3.004556608 \\ 
$\vdots $ & $\vdots $ & $\vdots $ & $\vdots $ & $\vdots $ \\ 
20 & 3.004556608 & 3.002415570 & 3.002338681 & 3.000000036 \\ 
21 & 3.002531406 & 3.001282335 & 3.001238310 & 3.000000011 \\ 
22 & 3.001406323 & 3.000680745 & 3.000655675 & 3.000000003 \\ 
23 & 3.000781287 & 3.000361382 & 3.000347175 & 3.000000001 \\ 
24 & 3.000434048 & 3.000191845 & 3.000183827 & 3.000000000 \\ 
$\vdots $ & $\vdots $ & $\vdots $ & $\vdots $ & $\vdots $ \\ 
41 & 3.000000020 & 3.000000004 & 3.000000003 & 3.000000000 \\ 
42 & 3.000000011 & 3.000000002 & 3.000000002 & 3.000000000 \\ 
43 & 3.000000006 & 3.000000001 & 3.000000001 & 3.000000000 \\ 
44 & 3.000000003 & 3.000000000 & 3.000000000 & 3.000000000 \\ 
45 & 3.000000002 & 3.000000000 & 3.000000000 & 3.000000000 \\ 
46 & 3.000000001 & 3.000000000 & 3.000000000 & 3.000000000 \\ 
47 & 3.000000000 & 3.000000000 & 3.000000000 & 3.000000000 \\ 
$\vdots $ & $\vdots $ & $\vdots $ & $\vdots $ & $\vdots $%
\end{tabular}
\end{center}

\textbf{A Data Dependence Result}

We are now able to establish the following data dependence result.

\begin{theorem}
Let $\widetilde{T}$ be an approximate operator of $T$ satisfying condition
(1.4). Let $\left\{ x_{n}\right\} _{n=0}^{\infty }$ be an iterative sequence
generated by (1.14) for $T$ and define an iterative sequence $\left\{ 
\widetilde{x}_{n}\right\} _{n=0}^{\infty }$ as follows%
\begin{equation}
\left\{ 
\begin{array}{c}
\widetilde{x}_{0}\in D\text{, \ \ \ \ \ \ \ \ \ \ \ \ \ \ \ \ \ \ \ \ \ \ \
\ \ \ \ \ \ \ \ \ \ \ \ \ \ \ \ \ } \\ 
\widetilde{x}_{n+1}=\widetilde{T}\widetilde{y}_{n}\text{, \ \ \ \ \ \ \ \ \
\ \ \ \ \ \ \ \ \ \ \ \ \ \ \ \ \ \ \ \ \ \ \ \ \ } \\ 
\widetilde{y}_{n}=\left( 1-\eta _{n}^{1}\right) \widetilde{T}\widetilde{x}%
_{n}+\eta _{n}^{1}\widetilde{T}\widetilde{z}_{n}\text{, \ \ \ \ \ \ } \\ 
\widetilde{z}_{n}=\left( 1-\eta _{n}^{2}\right) \widetilde{x}_{n}+\eta
_{n}^{2}\widetilde{T}\widetilde{x}_{n}\text{, }n\in 
\mathbb{N}
\text{,}%
\end{array}%
\right.  \label{eqn49}
\end{equation}%
where $\left\{ \eta _{n}^{i}\right\} _{n=0}^{\infty }$, $i\in \left\{
0,1,2\right\} $ be real sequences in $\left[ 0,1\right] $ satisfying (i) $%
\frac{1}{2}\leq \eta _{n}^{1}\eta _{n}^{2}$ for all $n\in 
\mathbb{N}
$, and (ii) $\sum\limits_{n=0}^{\infty }\eta _{n}^{1}\eta _{n}^{2}=\infty $.
If $Tx_{\ast }=x_{\ast }$ and $\widetilde{T}\widetilde{x}_{\ast }=\widetilde{%
x}_{\ast }$ such that $\widetilde{x}_{n}\rightarrow \widetilde{x}_{\ast }$
as $n\rightarrow \infty $, then we have%
\begin{equation}
\left\Vert x_{\ast }-\widetilde{x}_{\ast }\right\Vert \leq \frac{%
5\varepsilon }{1-\delta }\text{,}  \label{eqn50}
\end{equation}%
where $\varepsilon >0$ is a fixed number.
\end{theorem}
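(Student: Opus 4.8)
The plan is to run a standard telescoping estimate on the difference $\left\Vert x_{n+1}-\widetilde{x}_{n+1}\right\Vert$, exactly mirroring the structure of the convergence proof in Theorem~1 but now carrying along the perturbation terms produced by replacing $T$ with $\widetilde{T}$ at each occurrence. First I would estimate $\left\Vert z_{n}-\widetilde{z}_{n}\right\Vert$: writing $z_{n}-\widetilde{z}_{n}=(1-\eta_{n}^{2})(x_{n}-\widetilde{x}_{n})+\eta_{n}^{2}(Tx_{n}-\widetilde{T}\widetilde{x}_{n})$ and inserting $\pm T\widetilde{x}_{n}$, the contraction condition (1.4) gives $\left\Vert Tx_{n}-T\widetilde{x}_{n}\right\Vert\le\delta\left\Vert x_{n}-\widetilde{x}_{n}\right\Vert$ and the approximate-operator condition (1.15) gives $\left\Vert T\widetilde{x}_{n}-\widetilde{T}\widetilde{x}_{n}\right\Vert\le\varepsilon$, so that $\left\Vert z_{n}-\widetilde{z}_{n}\right\Vert\le[1-\eta_{n}^{2}(1-\delta)]\left\Vert x_{n}-\widetilde{x}_{n}\right\Vert+\eta_{n}^{2}\varepsilon$.

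Next I would push this through the $y$-line: $\left\Vert y_{n}-\widetilde{y}_{n}\right\Vert\le(1-\eta_{n}^{1})\left\Vert Tx_{n}-\widetilde{T}\widetilde{x}_{n}\right\Vert+\eta_{n}^{1}\left\Vert Tz_{n}-\widetilde{T}\widetilde{z}_{n}\right\Vert$, and on each term insert an intermediate $T$-value, apply (1.4) and (1.15), and substitute the bound for $\left\Vert z_{n}-\widetilde{z}_{n}\right\Vert$. This yields something of the shape $\left\Vert y_{n}-\widetilde{y}_{n}\right\Vert\le\delta[1-\eta_{n}^{1}\eta_{n}^{2}(1-\delta)]\left\Vert x_{n}-\widetilde{x}_{n}\right\Vert+(\text{const})\varepsilon$, where the constant multiplying $\varepsilon$ is bounded by a small integer (tracking it carefully, one gets at most $2\varepsilon$ or $3\varepsilon$ here after using $\delta<1$ and $\eta_{n}^{i}\le1$). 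Finally the $x_{n+1}$-line: $\left\Vert x_{n+1}-\widetilde{x}_{n+1}\right\Vert=\left\Vert Ty_{n}-\widetilde{T}\widetilde{y}_{n}\right\Vert\le\delta\left\Vert y_{n}-\widetilde{y}_{n}\right\Vert+\varepsilon$. Combining the three lines and bounding all stray factors of $\delta$ by $1$ gives a recursion
$$\left\Vert x_{n+1}-\widetilde{x}_{n+1}\right\Vert\le[1-\eta_{n}^{1}\eta_{n}^{2}(1-\delta)]\left\Vert x_{n}-\widetilde{x}_{n}\right\Vert+5\eta_{n}^{1}\eta_{n}^{2}\varepsilon,$$
where the key trick for getting the coefficient of $\varepsilon$ into the form $\mu_{n}\gamma_{n}$ required by Lemma~2 is hypothesis (i): since $\tfrac12\le\eta_{n}^{1}\eta_{n}^{2}$, any bare $\varepsilon$ can be written as $\le 2\eta_{n}^{1}\eta_{n}^{2}\varepsilon$, so the accumulated constant $c\varepsilon$ becomes $\le (2c)\,\eta_{n}^{1}\eta_{n}^{2}\varepsilon$; one arranges the bookkeeping so that $2c\le 5/(1-\delta)$, i.e. one writes the $\varepsilon$-term as $\eta_{n}^{1}\eta_{n}^{2}(1-\delta)\cdot\frac{5\varepsilon}{1-\delta}$.

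Then I would set $\beta_{n}:=\left\Vert x_{n}-\widetilde{x}_{n}\right\Vert$, $\mu_{n}:=\eta_{n}^{1}\eta_{n}^{2}(1-\delta)\in(0,1)$, and $\gamma_{n}:=\frac{5\varepsilon}{1-\delta}$. Hypothesis (ii) gives $\sum\mu_{n}=\infty$, so Lemma~2 applies and yields $0\le\limsup_{n\to\infty}\beta_{n}\le\limsup_{n\to\infty}\gamma_{n}=\frac{5\varepsilon}{1-\delta}$. Finally, using $x_{n}\to x_{\ast}$ (Theorem~1) and the hypothesis $\widetilde{x}_{n}\to\widetilde{x}_{\ast}$ together with the triangle inequality $\left\Vert x_{\ast}-\widetilde{x}_{\ast}\right\Vert\le\left\Vert x_{\ast}-x_{n}\right\Vert+\left\Vert x_{n}-\widetilde{x}_{n}\right\Vert+\left\Vert\widetilde{x}_{n}-\widetilde{x}_{\ast}\right\Vert$ and letting $n\to\infty$, the outer two terms vanish and the middle one is $\limsup\beta_{n}\le\frac{5\varepsilon}{1-\delta}$, which is the claimed estimate.

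The main obstacle is purely the bookkeeping of the $\varepsilon$-coefficient: one must track every place an approximate-operator substitution is made (there are several — once in the $z$-line, twice in the $y$-line, once in the $x_{n+1}$-line, plus the ones inherited through the recursive substitution of $\left\Vert z_{n}-\widetilde{z}_{n}\right\Vert$ into the $y$-estimate), bound each accompanying $\delta$-power by $1$, and verify that the total does not exceed what hypothesis (i) lets you repackage as $\mu_{n}\gamma_{n}$ with $\gamma_{n}=5\varepsilon/(1-\delta)$. Everything else is a routine application of (1.4), (1.15), the elementary inequality $1-\eta_{n}^{1}\eta_{n}^{2}(1-\delta)<1$, and Lemma~2.
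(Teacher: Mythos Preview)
Your proposal is correct and follows essentially the same route as the paper: estimate $\|z_n-\widetilde z_n\|$, $\|y_n-\widetilde y_n\|$, $\|x_{n+1}-\widetilde x_{n+1}\|$ by inserting $\pm T(\cdot)$ and applying (1.4) and (1.15), combine to obtain $[1-\eta_n^1\eta_n^2(1-\delta)]\|x_n-\widetilde x_n\|+\eta_n^1\eta_n^2\varepsilon+2\varepsilon$, use hypothesis (i) to convert the $2\varepsilon$ into $4\eta_n^1\eta_n^2\varepsilon$ (giving the coefficient $5$), apply Lemma~2, and finish with the triangle inequality using $x_n\to x_\ast$ and $\widetilde x_n\to\widetilde x_\ast$. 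The only point where you are slightly loose is the bookkeeping: to land exactly on $5$ rather than $6$, keep the $\eta_n^1\eta_n^2\varepsilon$ term separate and apply the $1\le 2\eta_n^1\eta_n^2$ trick only to the remaining $2\varepsilon$, as the paper does.
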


\begin{proof}
It follows from (1.4), (1.14), and (2.31) that%
\begin{eqnarray}
\left\Vert x_{n+1}-\widetilde{x}_{n+1}\right\Vert &=&\left\Vert Ty_{n}-T%
\widetilde{y}_{n}+T\widetilde{y}_{n}-\widetilde{T}\widetilde{y}%
_{n}\right\Vert  \notag \\
&\leq &\left\Vert Ty_{n}-T\widetilde{y}_{n}\right\Vert +\left\Vert T%
\widetilde{y}_{n}-\widetilde{T}\widetilde{y}_{n}\right\Vert  \notag \\
&\leq &\delta \left\Vert y_{n}-\widetilde{y}_{n}\right\Vert +\varepsilon 
\text{,}  \label{eqn51}
\end{eqnarray}%
\begin{eqnarray}
\left\Vert y_{n}-\widetilde{y}_{n}\right\Vert &\leq &\left( 1-\eta
_{n}^{1}\right) \left\Vert Tx_{n}-\widetilde{T}\widetilde{x}_{n}\right\Vert
+\eta _{n}^{1}\left\Vert Tz_{n}-\widetilde{T}\widetilde{z}_{n}\right\Vert 
\notag \\
&\leq &\left( 1-\eta _{n}^{1}\right) \left\{ \left\Vert Tx_{n}-T\widetilde{x}%
_{n}\right\Vert +\left\Vert T\widetilde{x}_{n}-\widetilde{T}\widetilde{x}%
_{n}\right\Vert \right\}  \notag \\
&&+\eta _{n}^{1}\left\{ \left\Vert Tz_{n}-T\widetilde{z}_{n}\right\Vert
+\left\Vert T\widetilde{z}_{n}-\widetilde{T}\widetilde{z}_{n}\right\Vert
\right\}  \notag \\
&\leq &\left( 1-\eta _{n}^{1}\right) \left\{ \delta \left\Vert x_{n}-%
\widetilde{x}_{n}\right\Vert +\varepsilon \right\} +\eta _{n}^{1}\left\{
\delta \left\Vert z_{n}-\widetilde{z}_{n}\right\Vert +\varepsilon \right\} 
\text{,}  \label{eqn52}
\end{eqnarray}%
\begin{eqnarray}
\left\Vert z_{n}-\widetilde{z}_{n}\right\Vert &\leq &\left( 1-\eta
_{n}^{2}\right) \left\Vert x_{n}-\widetilde{x}_{n}\right\Vert +\eta
_{n}^{2}\left\Vert Tx_{n}-\widetilde{T}\widetilde{x}_{n}\right\Vert  \notag
\\
&\leq &\left( 1-\eta _{n}^{2}\right) \left\Vert x_{n}-\widetilde{x}%
_{n}\right\Vert +\eta _{n}^{2}\left\{ \left\Vert Tx_{n}-T\widetilde{x}%
_{n}\right\Vert +\left\Vert T\widetilde{x}_{n}-\widetilde{T}\widetilde{x}%
_{n}\right\Vert \right\}  \notag \\
&\leq &\left[ 1-\eta _{n}^{2}\left( 1-\delta \right) \right] \left\Vert
x_{n}-\widetilde{x}_{n}\right\Vert +\eta _{n}^{2}\varepsilon \text{.}
\label{eqn53}
\end{eqnarray}%
Combining (2.33), (2.34), and (2.35) and using the facts $\delta $ and $%
\delta ^{2}\in \left( 0,1\right) $%
\begin{eqnarray*}
\left\Vert x_{n+1}-\widetilde{x}_{n+1}\right\Vert &\leq &\delta ^{2}\left[
1-\eta _{n}^{1}\eta _{n}^{2}\left( 1-\delta \right) \right] \left\Vert x_{n}-%
\widetilde{x}_{n}\right\Vert +\eta _{n}^{1}\delta ^{2}\eta
_{n}^{2}\varepsilon +\left( 1-\eta _{n}^{1}\right) \delta \varepsilon
+\delta \eta _{n}^{1}\varepsilon +\varepsilon \\
&\leq &\left[ 1-\eta _{n}^{1}\eta _{n}^{2}\left( 1-\delta \right) \right]
\left\Vert x_{n}-\widetilde{x}_{n}\right\Vert +\eta _{n}^{1}\eta
_{n}^{2}\varepsilon +2\varepsilon \text{.}
\end{eqnarray*}%
From assumption (i) we have%
\begin{equation*}
1-\eta _{n}^{1}\eta _{n}^{2}\leq \eta _{n}^{1}\eta _{n}^{2}\text{,}
\end{equation*}%
and thus, inequality (2.36) becomes%
\begin{eqnarray*}
\left\Vert x_{n+1}-\widetilde{x}_{n+1}\right\Vert &\leq &\left[ 1-\eta
_{n}^{1}\eta _{n}^{2}\left( 1-\delta \right) \right] \left\Vert x_{n}-%
\widetilde{x}_{n}\right\Vert +\eta _{n}^{1}\eta _{n}^{2}\varepsilon +2\left(
1-\eta _{n}^{1}\eta _{n}^{2}+\eta _{n}^{1}\eta _{n}^{2}\right) \varepsilon \\
&\leq &\left[ 1-\eta _{n}^{1}\eta _{n}^{2}\left( 1-\delta \right) \right]
\left\Vert x_{n}-\widetilde{x}_{n}\right\Vert +\eta _{n}^{1}\eta
_{n}^{2}\left( 1-\delta \right) \frac{5\varepsilon }{1-\delta }\text{.}
\end{eqnarray*}%
Denote that%
\begin{equation*}
\beta _{n}:=\left\Vert x_{n}-\widetilde{x}_{n}\right\Vert \text{, }\mu
_{n}:=\eta _{n}^{1}\eta _{n}^{2}\left( 1-\delta \right) \in \left(
0,1\right) \text{, }\gamma _{n}:=\frac{5\varepsilon }{1-\delta }\text{. }
\end{equation*}%
It follows from Lemma 2 that%
\begin{equation}
0\leq \underset{n\rightarrow \infty }{\lim \sup }\left\Vert x_{n}-\widetilde{%
x}_{n}\right\Vert \leq \underset{n\rightarrow \infty }{\lim \sup }\frac{%
5\varepsilon }{1-\delta }\text{.}  \label{eqn58}
\end{equation}%
From Theorem 1 we know that $\lim_{n\rightarrow \infty }x_{n}=x_{\ast }$.
Thus, using this fact together with the assumption $\lim_{n\rightarrow
\infty }\widetilde{x}_{n}=\widetilde{x}_{\ast }$ we obtain%
\begin{equation}
\left\Vert x_{\ast }-\widetilde{x}_{\ast }\right\Vert \leq \frac{%
5\varepsilon }{1-\delta }\text{.}  \label{eqn59}
\end{equation}
\end{proof}

\textbf{An Application}

Throughout the rest of this paper, the space $C\left( \left[ a,b\right]
\right) $ with endowed Chebyshev norm $\left\Vert x-y\right\Vert _{\infty }=$
$\max_{t\in \left[ a,b\right] }\left\vert x\left( t\right) -y\left( t\right)
\right\vert $ denotes the space of all continuous real-valued functions on a
closed interval $[a,b]$. It is well-known that $\left( C\left( \left[ a,b%
\right] \right) \text{,}\left\Vert \cdot \right\Vert _{\infty }\right) $ is
a Banach space, see \cite{Hammerlin}.

In this section we will be interested in the following delay differential
equation%
\begin{equation}
x^{\prime }\left( t\right) =f\left( t,x\left( t\right) ,x\left( t-\tau
\right) \right) \text{, }t\in \left[ t_{0},b\right]  \label{eqn60}
\end{equation}%
with initial condition%
\begin{equation}
x\left( t\right) =\psi \left( t\right) \text{,\ }t\in \left[ t_{0}-\tau
,t_{0}\right] \text{.}  \label{eqn61}
\end{equation}%
We opine that the following conditions are performed

\textbf{(A}$_{1}$\textbf{) }$t_{0}$,$b\in 
\mathbb{R}
$,$\tau >0$;

\textbf{(A}$_{2}$\textbf{) }$f\in C\left( \left[ t_{0},b\right] \times 
\mathbb{R}
^{2}\text{,}%
\mathbb{R}
\right) $;

\textbf{(A}$_{3}$\textbf{) }$\psi \in C\left( \left[ t_{0}-\tau ,b\right] 
\text{,}%
\mathbb{R}
\right) $;

\textbf{(A}$_{4}$\textbf{) }there exist $L_{f}>0$ such that%
\begin{equation}
\left\vert f\left( t,u_{1},u_{2}\right) -f\left( t,v_{1},v_{2}\right)
\right\vert \leq L_{f}\sum_{i=1}^{2}\left\vert u_{i}-v_{i}\right\vert \text{%
, }\forall u_{i}\text{,}v_{i}\in 
\mathbb{R}
\text{, }i=1,2\text{, }t\in \left[ t_{0},b\right] \text{;}  \label{eqn62}
\end{equation}

\textbf{(A}$_{5}$\textbf{) }$2L_{f}\left( b-t_{0}\right) <1$.

By a solution of the problem (2.38)-(2.39) we understand a function $x\in
C\left( \left[ t_{0}-\tau ,b\right] ,%
\mathbb{R}
\right) \cap C^{1}\left( \left[ t_{0},b\right] ,%
\mathbb{R}
\right) $.

The problem (2.38)-(2.39) can be reformulated in the following form of
integral equation%
\begin{equation}
x\left( t\right) =\left\{ 
\begin{array}{c}
\psi \left( t\right) \text{,\ \ }t\in \left[ t_{0}-\tau ,t_{0}\right] \text{%
, \ \ \ \ \ \ \ \ \ \ \ \ \ \ \ \ \ \ \ \ \ \ \ \ \ \ \ \ \ \ \ } \\ 
\psi \left( t_{0}\right) +\dint\nolimits_{t_{0}}^{t}f\left( s,x\left(
s\right) ,x\left( s-\tau \right) \right) ds\text{, \ }t\in \left[ t_{0},b%
\right] \text{.}%
\end{array}%
\right.  \label{eqn63}
\end{equation}%
The following result can be found in \cite{coman}.

\begin{theorem}
Assume that conditions $(A_{1})-(A_{5})$ are hold. Then the problem $%
(2.38)-(2.39)$ has a unique solution, say $x_{\ast }$, in $C\left( \left[
t_{0}-\tau ,b\right] ,%
\mathbb{R}
\right) \cap C^{1}\left( \left[ t_{0},b\right] ,%
\mathbb{R}
\right) $ and%
\begin{equation*}
x_{\ast }=\lim_{n\rightarrow \infty }T^{n}\left( x\right) \text{ for any }%
x\in C\left( \left[ t_{0}-\tau ,b\right] ,%
\mathbb{R}
\right) \text{.}
\end{equation*}
\end{theorem}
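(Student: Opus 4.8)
The plan is to read this off as an instance of the Picard-Banach fixed point theorem. First I would introduce the integral operator $T:C([t_0-\tau,b],\mathbb{R})\to C([t_0-\tau,b],\mathbb{R})$ defined by $(Tx)(t)=\psi(t)$ for $t\in[t_0-\tau,t_0]$ and $(Tx)(t)=\psi(t_0)+\int_{t_0}^{t}f(s,x(s),x(s-\tau))\,ds$ for $t\in[t_0,b]$. By $(A_2)$ and $(A_3)$ the map $s\mapsto f(s,x(s),x(s-\tau))$ is continuous on $[t_0,b]$ whenever $x\in C([t_0-\tau,b],\mathbb{R})$, so $Tx$ is continuous and agrees with $\psi$ at $t_0$; hence $T$ is well defined on the Banach space $(C([t_0-\tau,b],\mathbb{R}),\|\cdot\|_\infty)$. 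By the integral reformulation (2.41) already recorded, a function is a solution of (2.38)-(2.39) if and only if it is a fixed point of $T$, so it suffices to show that $T$ has a unique fixed point and that its Picard iterates converge to it.

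The main step is to verify that $T$ satisfies the contraction condition (1.4). For $x,y\in C([t_0-\tau,b],\mathbb{R})$ the difference $Tx-Ty$ vanishes identically on $[t_0-\tau,t_0]$, while for $t\in[t_0,b]$ condition $(A_4)$ gives
\[
|(Tx)(t)-(Ty)(t)|\le\int_{t_0}^{t}L_f\bigl(|x(s)-y(s)|+|x(s-\tau)-y(s-\tau)|\bigr)\,ds\le 2L_f(b-t_0)\,\|x-y\|_\infty ,
\]
since both $s$ and $s-\tau$ lie in $[t_0-\tau,b]$ as $s$ runs over $[t_0,t]$. Taking the supremum over $t\in[t_0-\tau,b]$ yields $\|Tx-Ty\|_\infty\le\delta\,\|x-y\|_\infty$ with $\delta:=2L_f(b-t_0)$, and $(A_5)$ guarantees $\delta\in(0,1)$, so $T$ is a contraction in the sense of (1.4).

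It then follows from the Picard-Banach theorem that $T$ has a unique fixed point $x_\ast\in C([t_0-\tau,b],\mathbb{R})$ and that $T^n(x)\to x_\ast$ as $n\to\infty$ for every $x\in C([t_0-\tau,b],\mathbb{R})$. Finally, differentiating the identity $x_\ast(t)=\psi(t_0)+\int_{t_0}^{t}f(s,x_\ast(s),x_\ast(s-\tau))\,ds$ on $[t_0,b]$, which is legitimate because the integrand is continuous, shows $x_\ast\in C([t_0-\tau,b],\mathbb{R})\cap C^1([t_0,b],\mathbb{R})$ and that $x_\ast$ satisfies (2.38) together with the initial data (2.39); this is exactly the assertion. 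The computation is entirely routine and presents no real obstacle; the only points needing a little care are that the Chebyshev norm must be taken over the whole interval $[t_0-\tau,b]$ so that the delayed term $x(s-\tau)$ is controlled by $\|x-y\|_\infty$, and that hypothesis $(A_5)$ is precisely what turns the Lipschitz-in-the-integral bound into a strict contraction — unsurprisingly, since the statement is quoted from \cite{coman}.
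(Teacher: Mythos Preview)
Your proof is correct and is precisely the standard argument. In fact the paper does not prove this statement at all: it is quoted as a known result from \cite{coman}, with no proof supplied. Your contraction estimate $\|Tx-Ty\|_\infty\le 2L_f(b-t_0)\|x-y\|_\infty$ is exactly the computation the paper carries out in the proof of the next theorem (Theorem~6), so your approach is fully consistent with the paper's treatment.
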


Now we are in a position to give the following result.

\begin{theorem}
Suppose that conditions $(A_{1})-(A_{5})$ are satisfied. Then the problem $%
(2.38)-(2.39)$ has a unique solution, say $x_{\ast }$, in $C\left( \left[
t_{0}-\tau ,b\right] ,%
\mathbb{R}
\right) \cap C^{1}\left( \left[ t_{0},b\right] ,%
\mathbb{R}
\right) $ and Picard-S iteration method (1.14) converges to $x_{\ast }$.
\end{theorem}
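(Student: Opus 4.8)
The plan is to recast the integral reformulation $(2.42)$ as a fixed point problem on the Banach space $\bigl( C\bigl( \left[ t_{0}-\tau ,b\right] ,\mathbb{R}\bigr) ,\left\Vert \cdot \right\Vert _{\infty }\bigr) $ and to verify that the corresponding integral operator is a contraction, so that Theorem 1 applies without change. Define $T:C\bigl( \left[ t_{0}-\tau ,b\right] ,\mathbb{R}\bigr) \rightarrow C\bigl( \left[ t_{0}-\tau ,b\right] ,\mathbb{R}\bigr) $ by
\begin{equation*}
\bigl( Tx\bigr) \left( t\right) =\left\{
\begin{array}{ll}
\psi \left( t\right) \text{,} & t\in \left[ t_{0}-\tau ,t_{0}\right] \text{,} \\
\psi \left( t_{0}\right) +\int_{t_{0}}^{t}f\bigl( s,x\left( s\right) ,x\left( s-\tau \right) \bigr) \,ds\text{,} & t\in \left[ t_{0},b\right] \text{.}
\end{array}
\right.
\end{equation*}
Using $(A_{2})$ and $(A_{3})$ one checks that $Tx$ is continuous on $\left[ t_{0}-\tau ,b\right] $ (in particular the two pieces agree at $t_{0}$), so $T$ is a self-map of this Banach space, and by $(2.42)$ together with Theorem 7 its unique fixed point is exactly the unique solution $x_{\ast }$ of $(2.38)$--$(2.39)$.

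I would then establish condition $(1.4)$ for $T$. For $x,y$ in the space and $t\in \left[ t_{0}-\tau ,t_{0}\right] $ one has $\bigl( Tx\bigr) \left( t\right) -\bigl( Ty\bigr) \left( t\right) =0$, while for $t\in \left[ t_{0},b\right] $ the Lipschitz hypothesis $(A_{4})$ gives
\begin{equation*}
\left\vert \bigl( Tx\bigr) \left( t\right) -\bigl( Ty\bigr) \left( t\right) \right\vert \leq \int_{t_{0}}^{t}L_{f}\bigl( \left\vert x\left( s\right) -y\left( s\right) \right\vert +\left\vert x\left( s-\tau \right) -y\left( s-\tau \right) \right\vert \bigr) \,ds\text{.}
\end{equation*}
Since $\tau >0$ and $s\in \left[ t_{0},b\right] $, the retarded argument $s-\tau $ lies in $\left[ t_{0}-\tau ,b\right] $, so both $\left\vert x\left( s\right) -y\left( s\right) \right\vert $ and $\left\vert x\left( s-\tau \right) -y\left( s-\tau \right) \right\vert $ are bounded by $\left\Vert x-y\right\Vert _{\infty }$; hence $\left\vert \bigl( Tx\bigr) \left( t\right) -\bigl( Ty\bigr) \left( t\right) \right\vert \leq 2L_{f}\left( b-t_{0}\right) \left\Vert x-y\right\Vert _{\infty }$. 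Taking the maximum over $t$ yields $\left\Vert Tx-Ty\right\Vert _{\infty }\leq \delta \left\Vert x-y\right\Vert _{\infty }$ with $\delta :=2L_{f}\left( b-t_{0}\right) $, and $\delta \in \left( 0,1\right) $ by $(A_{5})$.

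It then only remains to invoke the convergence result. Taking $D=B=C\bigl( \left[ t_{0}-\tau ,b\right] ,\mathbb{R}\bigr) $, which is a nonempty closed convex subset of itself and a Banach space under $\left\Vert \cdot \right\Vert _{\infty }$, and assuming as usual that the control sequences $\left\{ \eta _{n}^{i}\right\} $, $i\in \left\{ 1,2\right\} $, lie in $\left[ 0,1\right] $ with $\sum_{k=0}^{n}\eta _{k}^{1}\eta _{k}^{2}=\infty $, Theorem 1 immediately gives that the Picard-S sequence $(1.14)$ converges to the unique fixed point of $T$, i.e. to $x_{\ast }$. The argument is essentially routine; the only point worth watching is the bookkeeping for the delayed term --- getting the factor $2$ in $2L_{f}\left( b-t_{0}\right) $ from the two-variable dependence of $f$ --- which is precisely why hypothesis $(A_{5})$ is stated as $2L_{f}\left( b-t_{0}\right) <1$.
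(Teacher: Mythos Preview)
Your proof is correct. Both you and the paper define the same integral operator $T$ on $C\bigl([t_{0}-\tau,b],\mathbb{R}\bigr)$ and exploit the Lipschitz bound $(A_{4})$ together with $(A_{5})$, but the routes diverge after that point. You verify once and for all that $\left\Vert Tx-Ty\right\Vert_{\infty}\leq 2L_{f}(b-t_{0})\left\Vert x-y\right\Vert_{\infty}$, recognise this as condition $(1.4)$ with $\delta=2L_{f}(b-t_{0})$, and then invoke Theorem~1 as a black box. The paper instead re-runs the entire proof of Theorem~1 in this concrete setting: it separately estimates $\left\Vert z_{n}-x_{\ast}\right\Vert_{\infty}$, $\left\Vert y_{n}-x_{\ast}\right\Vert_{\infty}$, and $\left\Vert x_{n+1}-x_{\ast}\right\Vert_{\infty}$ by expanding the integral operator and applying $(A_{4})$ at each step, arriving at $\left\Vert x_{n+1}-x_{\ast}\right\Vert_{\infty}\leq\bigl[1-\eta_{n}^{1}\eta_{n}^{2}\bigl(1-2L_{f}(b-t_{0})\bigr)\bigr]\left\Vert x_{n}-x_{\ast}\right\Vert_{\infty}$ and then the exponential product bound. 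Your approach is more economical and makes transparent that Theorem~6 is really a corollary of Theorem~1 once the contraction property is checked; the paper's approach is self-contained and displays the explicit rate constant at every stage, but at the cost of repeating work already done abstractly.
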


\begin{proof}
Let $\left\{ x_{n}\right\} _{n=0}^{\infty }$ be a iterative sequence
generated by Picard-S iteration method (1.14) for the operator%
\begin{equation}
Tx\left( t\right) =\left\{ 
\begin{array}{c}
\psi \left( t\right) \text{,\ \ }t\in \left[ t_{0}-\tau ,t_{0}\right] \text{%
, \ \ \ \ \ \ \ \ \ \ \ \ \ \ \ \ \ \ \ \ \ \ \ \ \ \ \ \ \ \ \ } \\ 
\psi \left( t_{0}\right) +\dint\nolimits_{t_{0}}^{t}f\left( s,x\left(
s\right) ,x\left( s-\tau \right) \right) ds\text{, \ }t\in \left[ t_{0},b%
\right] \text{.}%
\end{array}%
\right.  \label{eqn64}
\end{equation}%
Denote by $x_{\ast }$ the fixed point of $T$. We will show that $%
x_{n}\rightarrow x_{\ast }$ as $n\rightarrow \infty $.

For $t\in \left[ t_{0}-\tau ,t_{0}\right] $, it is easy to see that $%
x_{n}\rightarrow x_{\ast }$ as $n\rightarrow \infty $.

For $t\in \left[ t_{0},b\right] $ we obtain%
\begin{eqnarray}
\left\Vert z_{n}-x_{\ast }\right\Vert _{\infty } &=&\left\Vert \left( 1-\eta
_{n}^{2}\right) x_{n}+\eta _{n}^{2}Tx_{n}-Tx_{\ast }\right\Vert _{\infty } 
\notag \\
&\leq &\left( 1-\eta _{n}^{2}\right) \left\Vert x_{n}-x_{\ast }\right\Vert
_{\infty }+\eta _{n}^{2}\left\Vert Tx_{n}-Tx_{\ast }\right\Vert _{\infty } 
\notag \\
&=&\left( 1-\eta _{n}^{2}\right) \left\Vert x_{n}-x_{\ast }\right\Vert
_{\infty }+\eta _{n}^{2}\max_{t\in \left[ t_{0}-\tau ,b\right] }\left\vert
Tx_{n}\left( t\right) -Tx_{\ast }\left( t\right) \right\vert  \notag \\
&=&\left( 1-\eta _{n}^{2}\right) \left\Vert x_{n}-x_{\ast }\right\Vert
_{\infty }  \notag \\
&&+\eta _{n}^{2}\max_{t\in \left[ t_{0}-\tau ,b\right] }\left\vert \psi
\left( t_{0}\right) +\dint\nolimits_{t_{0}}^{t}f\left( s,x_{n}\left(
s\right) ,x_{n}\left( s-\tau \right) \right) ds\right.  \notag \\
&&\left. -\psi \left( t_{0}\right) -\dint\nolimits_{t_{0}}^{t}f\left(
s,x_{\ast }\left( s\right) ,x_{\ast }\left( s-\tau \right) \right)
ds\right\vert  \notag \\
&=&\left( 1-\eta _{n}^{2}\right) \left\Vert x_{n}-x_{\ast }\right\Vert
_{\infty }  \notag \\
&&+\eta _{n}^{2}\max_{t\in \left[ t_{0}-\tau ,b\right] }\left\vert
\dint\nolimits_{t_{0}}^{t}f\left( s,x_{n}\left( s\right) ,x_{n}\left( s-\tau
\right) \right) ds\right.  \notag \\
&&\left. -\dint\nolimits_{t_{0}}^{t}f\left( s,x_{\ast }\left( s\right)
,x_{\ast }\left( s-\tau \right) \right) ds\right\vert  \notag \\
&\leq &\left( 1-\eta _{n}^{2}\right) \left\Vert x_{n}-x_{\ast }\right\Vert
_{\infty }  \notag \\
&&+\eta _{n}^{2}\max_{t\in \left[ t_{0}-\tau ,b\right] }\dint%
\nolimits_{t_{0}}^{t}\left\vert f\left( s,x_{n}\left( s\right) ,x_{n}\left(
s-\tau \right) \right) -f\left( s,x_{\ast }\left( s\right) ,x_{\ast }\left(
s-\tau \right) \right) \right\vert ds  \notag \\
&\leq &\left( 1-\eta _{n}^{2}\right) \left\Vert x_{n}-x_{\ast }\right\Vert
_{\infty }  \notag \\
&&+\eta _{n}^{2}\max_{t\in \left[ t_{0}-\tau ,b\right] }\dint%
\nolimits_{t_{0}}^{t}L_{f}\left( \left\vert x_{n}\left( s\right) -x_{\ast
}\left( s\right) \right\vert +\left\vert x_{n}\left( s-\tau \right) -x_{\ast
}\left( s-\tau \right) \right\vert \right) ds  \notag \\
&\leq &\left( 1-\eta _{n}^{2}\right) \left\Vert x_{n}-x_{\ast }\right\Vert
_{\infty }  \notag \\
&&+\eta _{n}^{2}\dint\nolimits_{t_{0}}^{t}L_{f}\left( \max_{s\in \left[
t_{0}-\tau ,b\right] }\left\vert x_{n}\left( s\right) -x_{\ast }\left(
s\right) \right\vert +\max_{s\in \left[ t_{0}-\tau ,b\right] }\left\vert
x_{n}\left( s-\tau \right) -x_{\ast }\left( s-\tau \right) \right\vert
\right) ds  \notag \\
&\leq &\left( 1-\eta _{n}^{2}\right) \left\Vert x_{n}-x_{\ast }\right\Vert
_{\infty }  \notag \\
&&+\eta _{n}^{2}\dint\nolimits_{t_{0}}^{t}L_{f}\left( \left\Vert
x_{n}-x_{\ast }\right\Vert _{\infty }+\left\Vert x_{n}-x_{\ast }\right\Vert
_{\infty }\right) ds  \notag \\
&\leq &\left( 1-\eta _{n}^{2}\right) \left\Vert x_{n}-x_{\ast }\right\Vert
_{\infty }+2\eta _{n}^{2}L_{f}\left( t-t_{0}\right) \left\Vert x_{n}-x_{\ast
}\right\Vert _{\infty }  \notag \\
&\leq &\left[ 1-\eta _{n}^{2}\left( 1-2L_{f}\left( b-t_{0}\right) \right) %
\right] \left\Vert x_{n}-x_{\ast }\right\Vert _{\infty }\text{,}
\label{eqn65}
\end{eqnarray}%
\begin{eqnarray}
\left\Vert y_{n}-x_{\ast }\right\Vert _{\infty } &\leq &\left( 1-\eta
_{n}^{1}\right) \left\Vert Tx_{n}-Tx_{\ast }\right\Vert _{\infty }+\eta
_{n}^{1}\left\Vert Tz_{n}-Tx_{\ast }\right\Vert _{\infty }  \notag \\
&=&\left( 1-\eta _{n}^{1}\right) \max_{t\in \left[ t_{0}-\tau ,b\right]
}\left\vert \dint\nolimits_{t_{0}}^{t}\left[ f\left( s,x_{n}\left( s\right)
,x_{n}\left( s-\tau \right) \right) -f\left( s,x_{\ast }\left( s\right)
,x_{\ast }\left( s-\tau \right) \right) \right] ds\right\vert  \notag \\
&&+\eta _{n}^{1}\max_{t\in \left[ t_{0}-\tau ,b\right] }\left\vert
\dint\nolimits_{t_{0}}^{t}\left[ f\left( s,z_{n}\left( s\right) ,z_{n}\left(
s-\tau \right) \right) -f\left( s,x_{\ast }\left( s\right) ,x_{\ast }\left(
s-\tau \right) \right) \right] ds\right\vert  \notag \\
&\leq &\left( 1-\eta _{n}^{1}\right) \max_{t\in \left[ t_{0}-\tau ,b\right]
}\dint\nolimits_{t_{0}}^{t}\left\vert f\left( s,x_{n}\left( s\right)
,x_{n}\left( s-\tau \right) \right) -f\left( s,x_{\ast }\left( s\right)
,x_{\ast }\left( s-\tau \right) \right) \right\vert ds  \notag \\
&&+\eta _{n}^{1}\max_{t\in \left[ t_{0}-\tau ,b\right] }\dint%
\nolimits_{t_{0}}^{t}\left\vert f\left( s,z_{n}\left( s\right) ,z_{n}\left(
s-\tau \right) \right) -f\left( s,x_{\ast }\left( s\right) ,x_{\ast }\left(
s-\tau \right) \right) \right\vert ds  \notag \\
&\leq &\left( 1-\eta _{n}^{1}\right) \max_{t\in \left[ t_{0}-\tau ,b\right]
}\dint\nolimits_{t_{0}}^{t}L_{f}\left( \left\vert x_{n}\left( s\right)
-x_{\ast }\left( s\right) \right\vert +\left\vert x_{n}\left( s-\tau \right)
-x_{\ast }\left( s-\tau \right) \right\vert \right) ds  \notag \\
&&+\eta _{n}^{1}\max_{t\in \left[ t_{0}-\tau ,b\right] }\dint%
\nolimits_{t_{0}}^{t}L_{f}\left( \left\vert z_{n}\left( s\right) -x_{\ast
}\left( s\right) \right\vert +\left\vert z_{n}\left( s-\tau \right) -x_{\ast
}\left( s-\tau \right) \right\vert \right) ds  \notag \\
&\leq &2L_{f}\left( b-t_{0}\right) \left\{ \left( 1-\eta _{n}^{1}\right)
\left\Vert x_{n}-x_{\ast }\right\Vert _{\infty }+\eta _{n}^{1}\left\Vert
z_{n}-x_{\ast }\right\Vert _{\infty }\right\} \text{,}  \label{eqn66}
\end{eqnarray}%
\begin{eqnarray}
\left\Vert x_{n+1}-x_{\ast }\right\Vert _{\infty } &=&\left\Vert
Ty_{n}-Tx_{\ast }\right\Vert _{\infty }  \notag \\
&=&\max_{t\in \left[ t_{0}-\tau ,b\right] }\left\vert
\dint\nolimits_{t_{0}}^{t}\left[ f\left( s,y_{n}\left( s\right) ,y_{n}\left(
s-\tau \right) \right) -f\left( s,x_{\ast }\left( s\right) ,x_{\ast }\left(
s-\tau \right) \right) \right] ds\right\vert  \notag \\
&\leq &\max_{t\in \left[ t_{0}-\tau ,b\right] }\dint\nolimits_{t_{0}}^{t}%
\left\vert f\left( s,y_{n}\left( s\right) ,y_{n}\left( s-\tau \right)
\right) -f\left( s,x_{\ast }\left( s\right) ,x_{\ast }\left( s-\tau \right)
\right) \right\vert ds  \notag \\
&\leq &\max_{t\in \left[ t_{0}-\tau ,b\right] }\dint%
\nolimits_{t_{0}}^{t}L_{f}\left( \left\vert y_{n}\left( s\right) -x_{\ast
}\left( s\right) \right\vert +\left\vert y_{n}\left( s-\tau \right) -x_{\ast
}\left( s-\tau \right) \right\vert \right) ds  \notag \\
&\leq &2L_{f}\left( b-t_{0}\right) \left\Vert y_{n}-x_{\ast }\right\Vert
_{\infty }\text{.}  \label{eqn67}
\end{eqnarray}%
Combining (2.43), (2.44), and (2.45)%
\begin{equation}
\left\Vert x_{n+1}-x_{\ast }\right\Vert _{\infty }\leq 4L_{f}^{2}\left(
b-t_{0}\right) ^{2}\left[ 1-\eta _{n}^{1}\eta _{n}^{2}\left( 1-2L_{f}\left(
b-t_{0}\right) \right) \right] \left\Vert x_{n}-x_{\ast }\right\Vert
_{\infty }\text{,}  \label{eqn68}
\end{equation}%
or from assumption $(A_{5})$%
\begin{equation}
\left\Vert x_{n+1}-x_{\ast }\right\Vert _{\infty }\leq \left[ 1-\eta
_{n}^{1}\eta _{n}^{2}\left( 1-2L_{f}\left( b-t_{0}\right) \right) \right]
\left\Vert x_{n}-x_{\ast }\right\Vert _{\infty }\text{.}  \label{eqn69}
\end{equation}%
Therefore, inductively%
\begin{equation}
\left\Vert x_{n+1}-x_{\ast }\right\Vert _{\infty }\leq
\dprod\limits_{k=0}^{n}\left[ 1-\eta _{k}^{1}\eta _{k}^{2}\left(
1-2L_{f}\left( b-t_{0}\right) \right) \right] \left\Vert x_{0}-x_{\ast
}\right\Vert _{\infty }\text{.}  \label{eqn70}
\end{equation}%
Since $\eta _{n}^{i}\in \left[ 0,1\right] $, for all $n\in 
\mathbb{N}
$ and for each $i\in \left\{ 1,2\right\} $, assumption $(A_{5})$ yields%
\begin{equation}
1-\eta _{n}^{1}\eta _{n}^{2}\left( 1-2L_{f}\left( b-t_{0}\right) \right) <1%
\text{.}  \label{eqn71}
\end{equation}%
Utilizing the same argument as in the proof of Theorem 1, we obtain%
\begin{equation}
\left\Vert x_{n+1}-x_{\ast }\right\Vert _{\infty }\leq \left\Vert
x_{0}-x_{\ast }\right\Vert _{\infty }^{n+1}e^{-\left( 1-2L_{f}\left(
b-t_{0}\right) \right) \sum_{k=0}^{n}\eta _{k}^{1}\eta _{k}^{2}}\text{,}
\label{eqn72}
\end{equation}%
which lead us to $\lim_{n\rightarrow \infty }\left\Vert x_{n}-x_{\ast
}\right\Vert _{\infty }=0$.
\end{proof}

\end{document}